\definecolor{dkgreen}{rgb}{0,0.6,0}
\definecolor{gray}{rgb}{0.5,0.5,0.5}
\definecolor{mauve}{rgb}{0.58,0,0.82}
\numberwithin{equation}{section} \numberwithin{figure}{section}
\renewcommand{\part}[1]{\textbf{\large Part \Alph{partCounter}}\stepcounter{partCounter}\\}
\begin{document}

\markboth{X. JIN AND S. WU} {$n$-rectangle nonconforming $H^3$ elements}
         
\title{TWO FAMILIES OF $n$-RECTANGLE NONCONFORMING FINITE ELEMENTS FOR SIXTH-ORDER ELLIPTIC EQUATIONS
\thanks{The work of Shuonan Wu is supported in part by the National Natural Science Foundation of China grant No. 12222101.}}

\author{Xianlin Jin
\thanks{School of Mathematical Sciences, Peking University, Beijing 100871, China \\ 
Email: xianlincn@pku.edu.cn} \and
Shuonan Wu
\thanks{School of Mathematical Sciences, Peking University, Beijing 100871, China \\
Email: snwu@math.pku.edu.cn} }

\maketitle

\begin{abstract}
In this paper, we propose two families of nonconforming finite
  elements on $n$-rectangle meshes of any dimension to solve the
  sixth-order elliptic equations.  The unisolvent property and the
  approximation ability of the new finite element spaces are
  established. A new mechanism, called \textit{the exchange of
  sub-rectangles}, for investigating the weak continuities of the proposed elements is discovered. With the help of some conforming relatives for the $H^3$ problems, we establish the quasi-optimal error estimate for the tri-harmonic equation in the broken $H^3$ norm of any dimension. The theoretical results are validated further by the numerical tests in both 2D and 3D situations.
\end{abstract}

\begin{classification}
65N30.
\end{classification}

\begin{keywords}
nonconforming finite element method, $n$-rectangle element,
  sixth-order elliptic equation, exchange of sub-rectangles
\end{keywords}

\section{Introduction} \label{sec:intro}

Sixth-order partial differential equations have been widely used to model various physical laws and dynamics in material sciences and phase field problems \cite{caginalp1986higher, elder2002modeling}. 
Owning such a significance in these areas, however, methods for solving the sixth-order equations are insufficient and less studied compared with the lower-order equations from both theoretical and numerical aspects. 
From a practical point of view, nonconforming finite element method is one of the frequently desired numerical methods for high order partial differential equations. 
In terms of solving sixth-order equations, the usage of nonconforming spaces allows us to avoid the requirement of $C^2$-continuity which causes high complexity for the implementation. 
Having a smaller set of degrees of freedom (DoFs) and a shrunken space of shape functions, yet the nonconforming finite elements should conceivably possess some basic weak continuity properties \cite{wang2001on} to preserve the convergence of the numerical solutions. 
Therefore, the design of such exquisite finite element spaces can be challenging for certain problems, especially in high dimensional situations.

Starting from the solving of fourth-order equations, there are several well-known nonconforming finite elements like the Morley element and the Zienkiewicz element designed on two-dimensional simplicial meshes. 
A similar idea was then applied to high dimensional case \cite{wang2007new} which generalizes the Zienkiewicz element to $n$-dimensional simplexes where $n\geq 2$. 
Further in \cite{wang2013minimal}, Wang and Xu proposed a family of nonconforming finite elements on simplexes named by the Morley-Wang-Xu element to solve $2m$-th-order elliptic equations where $m \leq n$. This result has been extended to $m=n+1$ in \cite{wu2019nonconforming}, and to arbitrary $m,n$ with interior stabilization \cite{wu2017interior}.  Restricted to the two-dimensional case, the nonconforming finite element spaces for $H^3$ or higher regularity can be found in \cite{hu2017canonical, hu2019cubic}.

On the simplicial meshes, other types of discretization besides the nonconforming finite element method for sixth order partial differential equations may also be feasible. 
In two-dimensional case, the $H^3$ conforming finite element was constructed in
\cite{vzenivsek1970interpolation} and can be generalized to arbitrary $H^m$ \cite{bramble1970triangular}. Recently, a construction of conforming finite element spaces with arbitrary smoothness in any dimension was given in \cite{hu2021construction}. Others include mixed methods \cite{schedensack2016new, droniou2019mixed}, $C^0$ interior penalty discontinuous Galerkin method \cite{gudi2011interior}, recovery-based method \cite{guo2018linear}, and virtual element methods \cite{chen2020nonconforming, chen2013nonconforming}.

As for rectangle meshes, successful constructions of finite element such as the Adini element \cite{adini1960analysis} of $C^0$ smoothness and Bogner-Fox-Schmidt element (BFS, \cite{bogner1965generation}) of $C^1$ smoothness were made on two-dimensional grids, whose DoFs are all defined on vertices of rectangles. 
After an extension \cite{wang2007some} to the $n$-rectangle meshes of any high dimensional spaces where $n\geq 2$, the Adini element and the BFS element possess only $C^0$ smoothness, and yet their solvabilities to the fourth-order equations have both been remained. Furthermore, an extended version of the Morley element to the $n$-rectangle meshes was also reported in \cite{wang2007some}.
 For the biharmonic equation,  a new family of $n$-rectangle nonconforming finite element by enriching the second-order serendipity element was constructed in \cite{zhou2020high}. 
 For arbitrary smoothness, a family of minimal $n$-rectangle macro-elements was established in \cite{hu2015minimal}.  
 
 
  In \cite{wang2007some}, Wang, Shi and Xu showed that the Morley, Adini and BFS element are of the first-order convergence in the energy norm for solving the biharmonic equation. A more delicate analysis proposed in \cite{hu2016capacity} reveals that the Adini element is capable of reaching a second-order convergence in the energy norm and has an optimal second-order convergence in the $L^2$-norm. 
It cannot be overlooked that theories of nonconforming finite element methods are well-prepared for the fourth-order equations on a variety of $n$-rectangle discretizations, yet very little is extended to the solving of sixth-order problems. 

In this paper, we develop two families of $n$-rectangle nonconforming finite elements for sixth-order partial differential equations. Both the two families of elements are constructed by enriching the DoFs of the $n$-rectangle Adini element \cite{wang2007some} and the corresponding shape function space. Following the well-developed projection-averaging strategy \cite{wang2007some}, we give the definition of the interpolation operator in high dimensional cases for both two families of elements. It can be shown that the shape function spaces are capable of approximating $H^{3+s}(\Omega)$ for any $s\in [0,1]$ in an arbitrarily high dimension, which are essential to the error estimate afterwards. 

Furthermore, analysis of the weak continuity properties usually plays
an important role in the investigation of a nonconforming finite
element. Reasonably, difficulties brought by the sixth-order
differential operator $(-\Delta)^3$ mainly occur when considering the
weak continuities of the following second-order derivatives of the
finite element function: the tangential-tangential
($\partial_{\tau\tau}$), normal-normal ($\partial_{\nu\nu}$) and
tangential-normal ($\partial_{\tau\nu}$) derivatives across the
$(n-1)$-dimensional faces of an element $T$. It is possible to make
use of the interpolations of other well-known $n$-rectangle finite
elements to locally estimate the terms of $\partial_{\tau\tau}$ and
$\partial_{\nu\nu}$. However, the analysis of $\partial_{\tau\nu}$ is
way more complicated for both the two families of elements, so that we
only consider estimating this term in a more global manner. We
therefore propose a new technique called {\it exchange of
sub-rectangles} to deal with this complicated term. Combining the
results of weak continuities and the help of conforming relatives, we
complete estimating the consistency error, which gives the final error
estimate by applying the well-known Strang's Lemma.

%

Given a multi-index $\alpha = (\alpha_1,\alpha_2,\cdots,\alpha_n)$, we set $|\alpha| = \sum_{i=1}^{n}\alpha_i$ and $x^\alpha = x_1^{\alpha_1}x_2^{\alpha}\cdots x_n^{\alpha_n}$ for $x\in \mathbb{R}^n$. For a subset $B \subset \mathbb{R}^n$ and a nonnegative integer $r$, let $\mathcal{P}_r(B)$ and $Q_r(B)$ be the spaces of polynomial on $B$ defined by 
$$
\mathcal{P}_r(B) := \operatorname{span}\{x^\alpha~|~ |\alpha| \leq r \}, \qquad 
Q_r(B) := \operatorname{span}\{x^\alpha~|~ \alpha_i \leq r \}.
$$ 
Moreover, we denote $Q_1^{\hat{i}}(B)$ as the subspace of $Q_1(B)$ with no dependence on $x_i$, i.e., 
\begin{equation} \label{eq:Q1-hat-i}
Q_1^{\hat{i}}(B) := \operatorname{span} \{x^\alpha~|~ \alpha_i = 0, \alpha_j \leq 1\}.
\end{equation}
For any finite dimensional sets of functions $A$ and $B$, we denote by $A\cdot B := \operatorname{span} \{ab ~|~ a\in A, b \in B\}$.
In this paper, we will also use the notation $x \lesssim y$ to represent $x \leq C y$ for some constant $C$ independent of the crucial parameter such as the mesh size $h$.

The rest of the paper is organized as follows. 
In Section \ref{sec:element} we introduce some basic notations and give definitions to the two families of $n$-rectangle nonconforming finite element. Unisolvent properties and part of the weak continuities are also developed herein. 
The approximation properties of the nonconforming spaces are discussed and proved in Section \ref{sc:approximation}, where same methods are used to verify the existence of some necessary conforming relatives. 
In Section \ref{sc:tn} we present the main technique of analyzing the weak continuity of $\partial_{\tau\nu}$ derivatives and several attached conclusions. 
Finally we give the full estimate of the numerical solutions of our new finite elements in Section \ref{sc:convergence} and three numerical examples to verify our theories in Section \ref{sc:numerical}. 
Concluding remarks are given in Section \ref{sc:conclusion}.

\section{The $H^3$-nonconforming $n$-Rectangle Elements} \label{sec:element}

In this section, we construct two families of $H^3$-nonconforming elements which are defined on the $n$-rectangle meshes. Let $\Omega \subset \mathbb{R}^n~(n\geq2)$ denote a bounded polyhedral domain with boundary $\partial \Omega$, $\nu = \left(\nu_1, \nu_2, \cdots, \nu_n \right)^{\top}$ be the unit outer normal vector to $\partial \Omega$, and $\mathcal{T}_h$ be a quasi-uniform $n$-rectangle discretization on $\Omega$ with the mesh size $h>0$. 

Throughout this paper, we will use the standard notations of the Sobolev spaces. Let $m\geq 0$ be an integer, we define the following mesh-dependent norm and semi-norm: 
\begin{equation*}
\|v\|_{m,h} = \left( \sum_{T\in\mathcal{T}_h}\|v\|^2_{m,T} \right)^{1/2}, \quad |v|_{m,h} = \left( \sum_{T\in\mathcal{T}_h}|v|^2_{m,T} \right)^{1/2},
\end{equation*}
for a function $v$ with $v|_{T} \in H^m(T), ~\forall T\in \mathcal{T}_h$.

\subsection{Preliminaries}
For a given point $c = (c_1,c_2,\cdots, c_n)^\top \in \mathbb{R}^n$ and $h_1,h_2,\cdots,h_n$ being $n$ positive numbers, an $n$-rectangle $T$ is described in the barycentric coordinate $\xi = (\xi_1,\xi_2,\cdots, \xi_n)^\top $ as follows:
\begin{equation} \label{eq:n-rectangle}
T = \lbrace x\in \mathbb{R}^n ~|~ x_i = c_i + h_i \xi_i, ~ -1\leq \xi_i \leq 1, ~ 1\leq i \leq n \rbrace,
\end{equation}
with $2^n$ vertices given by 
$$
a_i := (c_1 + \xi_{i1}h_1, c_2 + \xi_{i2}h_2, \cdots, c_n + \xi_{in}h_n)^\top, \quad 1\leq i\leq 2^n.
$$
Here, the values $(\xi_{i1}, \xi_{i2}, \cdots \xi_{in})^\top =(\pm 1, \pm 1, \cdots, \pm 1)^\top$ for $1 \leq i \leq 2^n$. The $(n-1)$-dimensional faces of the element $T$ are denoted by
\begin{equation*}
F_{i}^{\pm} := \lbrace x\in \partial T~|~ \xi_i=\pm 1, -1\leq \xi_j \leq 1, 1\leq j\leq n, j\neq i \rbrace \quad 1\leq i\leq n,
\end{equation*}
whose barycenters are written as $b_i^\pm := (c_1, \cdots, c_{i-1}, c_i \pm h_i, c_{i+1}, \cdots, c_n)^\top$.

Following the standard description in \cite{brenner2008mathematical}, a finite element can be represented by a triple $(T,\mathcal{P}_T, \mathcal{N}_T)$ where $T$, taken as an $n$-rectangle \eqref{eq:n-rectangle}, describes the geometric shape, $\mathcal{P}_T$ the shape function space and $\mathcal{N}_T$ the vector of degrees of freedom (DoFs). We first review several $n$-rectangle finite elements that will be helpful for further analysis. 
 
 \begin{enumerate}
 \item $n$-rectangle $Q_1$ element: $\mathcal{P}_T := Q_1(T)$ and the DoFs are defined as 
 $$
 \mathcal{N}_T(v) = \left(v(a_1), v(a_2), \cdots, v(a_{2^n}) \right)^\top.
 $$ 
Further, it is well-known that the polynomials
\begin{equation}\label{eq: Q1-basis}
p_{0i} = \dfrac{1}{2^n} \prod_{j=1}^{n}(1+\xi_{ij} \xi_j),\quad 1\leq i\leq 2^n
\end{equation}
form a set of basis functions of the space $Q_1(T)$. Accordingly, the canonical interpolation operator $\Pi_T^{\bm{0}}: C^0(T) \rightarrow Q_1(T)$ is defined as
\begin{equation*} 
\mathcal{N}_T(\Pi_T^{\bm{0}} v) = \mathcal{N}_T(v), \quad \text{or} \quad \Pi_T^{\bm{0}} v := \sum_{i=1}^{2^n} p_{0i} v(a_i), \quad \forall v \in C^0(T).
\end{equation*}

\item $n$-rectangle Adini element \cite{wang2007some}: $\mathcal{P}_T := Q_1(T) \cdot \operatorname{span}\{1, x_i^2~|~ 1 \leq i \leq n\}$ and the DoFs are defined as 
$$
\mathcal{N}_T(v) = \left( v(a_1),\nabla v(a_1)^\top, v(a_2),\nabla v(a_2)^\top, \cdots, v(a_{2^n}),\nabla v(a_{2^n})^\top \right)^\top.
$$ 
The canonical interpolation operator is denoted by $\Pi_T^{\bm{1}}$.
\item $n$-rectangle partial Adini element: $\mathcal{P}_T := Q_1(T) \cdot \operatorname{span}\{1, x_i^2\}$, and the DoFs are defined as 
$$
\mathcal{N}_T(v) = \left( v(a_1), \frac{\partial v}{\partial x_i}(a_1), v(a_2), \frac{\partial v}{\partial x_i}(a_2), \cdots, v(a_{2^n}),\frac{\partial v}{\partial x_i}(a_{2^n}) \right)^\top.
$$ 
The canonical interpolation operator is denoted by  $\Pi_T^{\bm{e}_i}$.
 \end{enumerate}

For any $v$ in the finite element spaces by the above elements, on any $(n-1)$-dimensional face $F$ of $T \in \mathcal{T}_h$, the restriction of $v|_F$ is a polynomial of $(n-1)$ variables in the shape function space $\mathcal{P}(F)$. Then $v|_F$ is uniquely determined by the DoFs on $F$ (which also proves the unisolvent properties of the above elements by induction on the dimension). Therefore, $v$ is continuous through $F$.  Next, for any piecewise smooth function $v$ with the same inter-element degrees of freedom, the interpolation operator can be given element by element, i.e.,
\begin{equation}\label{eq:global-Q1-Pi}
\Pi_h^{\bm{\beta}} \vert_{T} v := \Pi_T^{\bm{\beta}} v,\quad \forall T \in \mathcal{T}_h, \qquad \bm{\beta} = \bm{0}, \bm{e}_i,\text{or } \bm{1}. 
\end{equation}
Here, we unify the notations by denoting $\beta_i$ as the highest order of derivative along $x_i$.

\subsection{The $n$-rectangle Morley-type element}
Define
\begin{equation} \label{eq:Morley-shape}
\mathcal{P}_M(T) := Q_1(T) \cdot \operatorname{span}\{1, x_i^2 ~|~ 1\leq i \leq n\} + \operatorname{span}\{ x_i^4, x_i^5 ~|~ 1 \leq i \leq n\}.
\end{equation}
It can be verified that $\mathcal{P}_3(T) \subset \mathcal{P}_M(T)$. For the $n$-rectangle Morley-type element, $\mathcal{P}_T$ and $\mathcal{N}_T$ are given by (see Fig. \ref{fig:Morley-element}):

\begin{figure}[H]
\centering
\subfloat[Rectangular element.]
{
\includegraphics[width=1.8in]{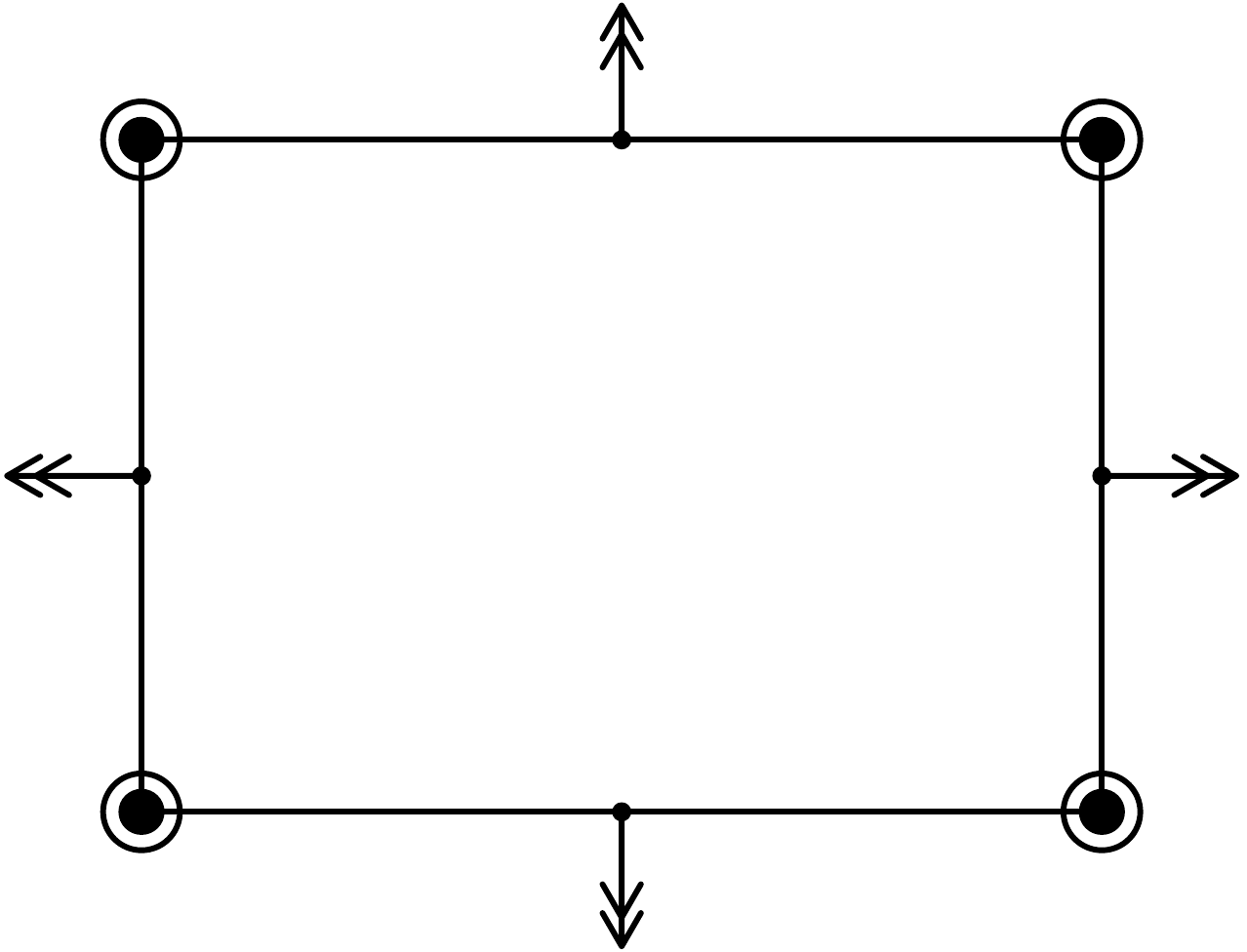} 
\label{fig:rect-Morley}
} \qquad 
\subfloat[Cubic element.]
{
\includegraphics[width=2.0in]{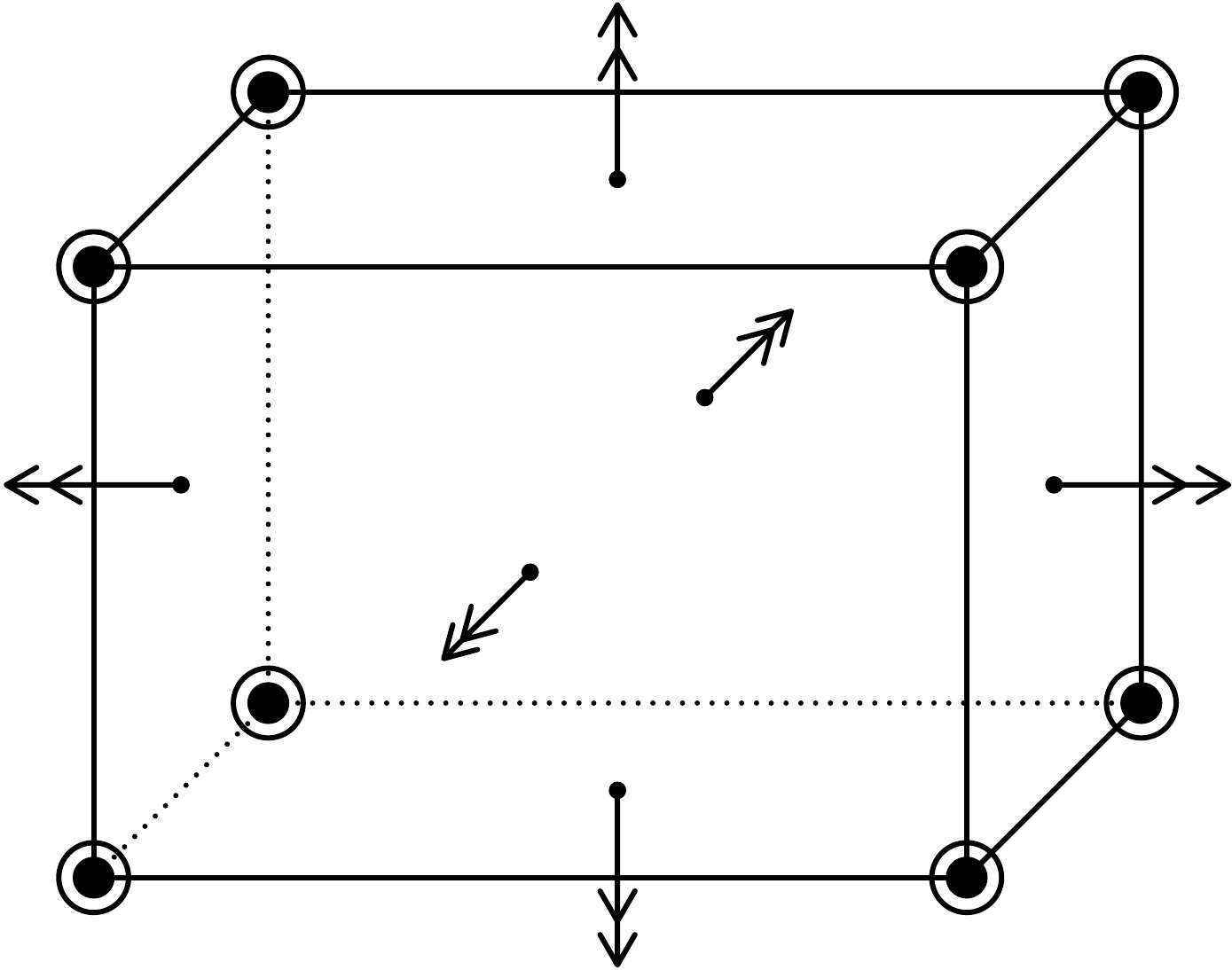} 
\label{fig:cubic-Morley}
} 
\caption{Degrees of freedom of the $H^3$-nonconforming Morley-type element.} \label{fig:Morley-element}.
\end{figure}

\begin{itemize}
\item $\mathcal{P}_T = \mathcal{P}_{M}(T)$.
\item For $v \in C^2(T)$, the vector $\mathcal{N}_T(v) $ of degree of freedom is
\begin{equation*}
\mathcal{N}_T(v) = \left( v(a_1),\nabla v(a_1)^\top, , \cdots, v(a_{2^n}),\nabla v(a_{2^n})^\top,  \frac{\partial^2 v}{\partial \nu^2}(b_1^\pm), \cdots, \frac{\partial^2 v}{\partial \nu^2}(b_{n}^\pm) \right)^\top.
\end{equation*}
\end{itemize}

The basis functions of the $n$-rectangle Morley element is denoted by $p_{0i}$ (i.e., corresponding to the nodal values), $p_{ji}$ (i.e., corresponding to $\frac{\partial v(a_i)}{\partial x_j}$), and $r_{k}^\pm$ (i.e., corresponding to the 2nd normal derivative on the face center $b_{k}^\pm$), which are given by 
\begin{equation}\label{eq:Morely-basis}
\left\{
\begin{aligned}
&p_{0i} = \dfrac{1}{2^{n+1}} \Big (2+\sum_{k=1}^n (\xi_{ik}\xi_{k}-\xi_k^2) \Big ) \prod_{k=1}^{n} (1+\xi_{ik}\xi_k) + \dfrac{3}{2^{n+3}} \sum_{k=1}^{n} \xi_{ik}\xi_k(\xi_k^2-1)^2, &1\leq i\leq 2^n,\\
&p_{ji} = \dfrac{h_j \xi_{ij}}{2^{n+1}}(\xi_j^2-1)\prod_{k=1}^{n}(1+\xi_{ik}\xi_k) - \dfrac{h_j}{2^{n+3}}(\xi_{ij}+3\xi_j)(\xi_j^2-1)^2, &1\leq i\leq 2^n, 1\leq j\leq n,\\ 
&r_{k}^\pm = \pm\dfrac{h_k^2}{16}(\xi_k+1)^2(\xi_k-1)^2(\xi_k \pm 1), &1\leq k\leq n.
\end{aligned}
\right.
\end{equation}

For the $n$-rectangle Morley-type element, we can define the corresponding $H^3$-nonconforming finite element spaces $V_h$ and $V_{h0}$ as follows: $V_h$ consists of all functions $v_h$ such that for any $T \in \mathcal{T}_h$: (1) $v_h|_T \in \mathcal{P}_M(T)$, (2) $v_h$ is $C^1$-continuous at all vertices of $T$, and (3) the second normal derivatives of $v_h$ is continuous at the barycenters of all $(n-1)$-dimensional faces of $T$; $V_{h0}$ consists of all functions $v_h \in V_h$ such that for any $T \in \mathcal{T}_h$, $v_h$ and $\nabla v_h$ vanish at the vertices of $T$ belonging to $\partial \Omega$ and the second normal derivative of $v_h$ vanishes at the barycenter of all $(n-1)$-dimensional faces of $T$ on $\partial \Omega$.

It can be seen that the DoFs for Morley-type finite element consists of that for Adini finite element space and the second-order normal derivative on faces.  Moreover, $\mathcal{P}_M(T)$ contains the shape function space of the Adini element. Therefore, 
\begin{equation} \label{eq:Morley-Adini}
\left(v_h - \Pi_h^{\bm{1}} v_h\right)|_T \in \operatorname{span}\{r_k^\pm~|~ 1\leq k \leq n\} \quad \forall v_h \in V_h. 
\end{equation}
Here, we recall that $\Pi_h^{\bm{1}}$ stands for the interpolation to Adini finite element space \eqref{eq:global-Q1-Pi}. 


\begin{lemma}[tangential-tangential weak continuity for Morley] \label{lm:Morley-tt}
Let $V_h$ and $V_{h0}$ be the finite element spaces of the $n$-rectangle Morley-type element. Then, 
\begin{equation} \label{eq:Morley-tt}
\int_F \dfrac{\partial^2}{\partial \tau_1 \partial\tau_2} (v|_T) = \int_F  \dfrac{\partial^2}{\partial \tau_1\partial \tau_2} (v|_{T'}) \quad \forall v \in V_h,
\end{equation}
where $T,T' \in \mathcal{T}_h$ share a common $(n-1)$-dimensional interior face $F$,  $\tau_1$ and $\tau_2$ are the unit tangential vectors on $F$. Moreover, if an $(n-1)$-dimensional face $F$ of $T \in \mathcal{T}_h$ is on $\partial \Omega$, then 
\begin{equation} \label{eq:Morley-tt-bc}
\int_F  \dfrac{\partial^2}{\partial \tau_1\partial \tau_2} (v|_T) = 0 \quad \forall v \in V_{h0}.
\end{equation}
\end{lemma}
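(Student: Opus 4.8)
The strategy is to split $v$ according to \eqref{eq:Morley-Adini} and reduce the tangential--tangential continuity of the Morley-type element to the (rigid) trace continuity of the Adini element. On a fixed $T \in \mathcal{T}_h$ we write
\begin{equation*}
v|_T = \big(\Pi_h^{\bm{1}} v\big)\big|_T + \sum_{k=1}^{n}\big(c_k^{+} r_k^{+} + c_k^{-} r_k^{-}\big)
\end{equation*}
for suitable coefficients $c_k^{\pm}\in\mathbb{R}$. Since $\frac{\partial^2}{\partial\tau_1\partial\tau_2}$ is an intrinsic second-order differential operator on the flat face $F$, the number $\int_F \frac{\partial^2}{\partial\tau_1\partial\tau_2}(v|_T)$ depends only on the one-sided trace $(v|_T)|_F$, so it suffices to examine the two pieces of this decomposition on $F$ separately.

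For the enrichment piece, let $i$ be the index with $F$ normal to the $x_i$-axis. The functions $r_i^{\pm}$ contain the factor $(\xi_i^2-1)^2$ and hence vanish identically on $F$. For $k\neq i$, the function $r_k^{\pm}$ depends on $x_k$ alone, $x_k$ is a tangential direction of $F$, so $\frac{\partial^2}{\partial\tau_1\partial\tau_2} r_k^{\pm} = (\tau_1)_k (\tau_2)_k\, \partial_{x_k}^2 r_k^{\pm}$, and a one-variable computation gives
\begin{equation*}
\int_F \partial_{x_k}^2 r_k^{\pm}\,\mathrm{d}S = \Big(\prod_{m\neq i,k} 2h_m\Big)\frac{1}{h_k}\Big[\partial_{\xi_k} r_k^{\pm}\Big]_{\xi_k=-1}^{\xi_k=1} = 0,
\end{equation*}
because $\partial_{\xi_k} r_k^{\pm}$ retains the factor $(\xi_k^2-1)$ and therefore vanishes at $\xi_k=\pm1$. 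Thus, by linearity, the enrichment piece contributes nothing, and on either element adjacent to $F$,
\begin{equation*}
\int_F \frac{\partial^2}{\partial\tau_1\partial\tau_2}(v|_T) = \int_F \frac{\partial^2}{\partial\tau_1\partial\tau_2}\big((\Pi_h^{\bm{1}} v)|_T\big).
\end{equation*}

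Finally, by the trace argument of Section \ref{sec:element}, the restriction of the Adini function $\Pi_h^{\bm{1}} v$ to $F$ is the unique polynomial in the corresponding face shape space determined by the nodal values and tangential gradients at the $2^{n-1}$ vertices of $F$. Since $v\in V_h$ is $C^1$-continuous at the vertices, these DoFs agree when computed from either side of $F$, so $(\Pi_h^{\bm{1}} v)|_T$ and $(\Pi_h^{\bm{1}} v)|_{T'}$ coincide as polynomials on $F$, and integrating $\frac{\partial^2}{\partial\tau_1\partial\tau_2}$ over $F$ gives \eqref{eq:Morley-tt}. When $F\subset\partial\Omega$ and $v\in V_{h0}$, all these vertex values and gradients vanish, so $(\Pi_h^{\bm{1}} v)|_F\equiv 0$, which together with the enrichment computation yields \eqref{eq:Morley-tt-bc}. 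The only genuine computation here is the one-variable integral above; the conceptual point is that \eqref{eq:Morley-Adini} lets us pass from $V_h$ to the Adini element, whose face trace is pinned down by the vertex DoFs alone.
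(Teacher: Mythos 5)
Your proof is correct and follows essentially the same route as the paper: it uses the decomposition \eqref{eq:Morley-Adini} to reduce to the Adini interpolant plus the enrichment functions $r_k^\pm$, kills the enrichment contribution by the same one-variable computation (the factor $\xi_k^2-1$ in $\partial_{\xi_k} r_k^\pm$), and invokes the $C^0$-continuity of the Adini trace on $F$. The only difference is that you spell out the face-trace/vertex-DoF argument and the $k=i$ versus $k\neq i$ case split that the paper leaves implicit.
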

\begin{proof}
We first observe that the basis function $r_k^\pm$ depends only on $\xi_k$ and vanishes on $F_k^\pm$. On any face $F_j^\pm (j \neq k)$, we have 
$$ 
\int_{F_j^\pm} \dfrac{\partial^2 r_k^\pm}{\partial x_k^2} = 
h_k^{-2} \int_{F_j^\pm} \dfrac{\partial^2 r_k^\pm}{\partial \xi_k^2} = 2^{n-2} h_k^{-2} |F_j^\pm| \left.\dfrac{\partial r_k^\pm}{\partial \xi_k}\right|_{\xi_k=-1}^{\xi_k=1} = 0.
$$ 
Using \eqref{eq:Morley-Adini} and the fact that the Adini finite element space is continuous \cite{wang2007some}, we have
$$
\int_F \dfrac{\partial^2}{\partial \tau_1 \partial\tau_2} (v|_T) - \int_F  \dfrac{\partial^2}{\partial \tau_1\partial \tau_2} (v|_{T'}) = 
\int_F \dfrac{\partial^2}{\partial \tau_1 \partial\tau_2} (v- \Pi_h^{\bm{1}} v|_T) - \int_F  \dfrac{\partial^2}{\partial \tau_1\partial \tau_2} (v - \Pi_h^{\bm{1}} v|_{T'}) = 0.
$$
This proves \eqref{eq:Morley-tt}. For $v \in V_{h0}$, we have $\Pi_h^{\bm{1}} v|_{\partial \Omega} = 0$, which leads to \eqref{eq:Morley-tt-bc}.  $\blacksquare$
\end{proof}


\begin{lemma}[normal-normal weak continuity for Morley] \label{lm:Morley-nn}
Let $V_h$ and $V_{h0}$ be the finite element spaces of the $n$-rectangle Morley-type element. Then, 
\begin{equation} \label{eq:Morley-nn}
\int_F \dfrac{\partial^2}{\partial \nu^2} (v|_T) = \int_F  \dfrac{\partial^2}{\partial \nu^2} (v|_{T'}) \quad \forall v \in V_h,
\end{equation}
where $T,T' \in \mathcal{T}_h$ share a common $(n-1)$-dimensional interior face $F$. Moreover, if an $(n-1)$-dimensional face $F$ of $T \in \mathcal{T}_h$ is on $\partial \Omega$, then 
\begin{equation} \label{eq:Morley-nn-bc}
\int_F  \dfrac{\partial^2}{\partial \nu^2} (v|_T) = 0 \quad \forall v \in V_{h0}.
\end{equation}
\end{lemma}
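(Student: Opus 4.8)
The plan is to collapse the face integral onto a single degree of freedom, namely the second normal derivative at the barycenter of $F$. Since $F$ is an $(n-1)$-dimensional face of the $n$-rectangle $T$, it must be one of the faces $F_k^\pm$ for some $1\le k\le n$; hence the normal direction is $x_k$ and $\frac{\partial^2}{\partial\nu^2}=\frac{\partial^2}{\partial x_k^2}$ on $F$, irrespective of the orientation of $\nu$. The same reduction applies to $T'$, for which $F$ is a face perpendicular to the $x_k$-axis, and the barycenter of $F$ is a common node of $T$ and $T'$.

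The first and main step is to show that for every $v\in\mathcal{P}_M(T)$ the restriction of $\frac{\partial^2 v}{\partial x_k^2}$ to $F_k^\pm$ is multilinear in the tangential coordinates, i.e.\ lies in $Q_1^{\hat{k}}(F_k^\pm)$. I would verify this on the generators of $\mathcal{P}_M(T)$ listed in \eqref{eq:Morley-shape}: $\partial^2_{x_k x_k}$ annihilates $Q_1(T)$, and likewise annihilates $Q_1(T)\cdot x_i^2$, $x_i^4$ and $x_i^5$ for $i\neq k$; on $x_k^4$ and $x_k^5$ it produces a polynomial in $x_k$ only, hence a constant on $F_k^\pm$; and on a generator $(p_0+p_1 x_k)x_k^2$ of $Q_1(T)\cdot x_k^2$, with $p_0,p_1\in Q_1^{\hat{k}}(T)$, it produces $2p_0+6p_1 x_k$, which restricts on $F_k^\pm$ (where $x_k=c_k\pm h_k$) to $2p_0+6(c_k\pm h_k)p_1\in Q_1^{\hat{k}}(F_k^\pm)$. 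Summing the contributions gives the claim.

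Since the average of a multilinear function over a box equals its value at the center, the first step yields
\begin{equation*}
\int_F \frac{\partial^2}{\partial\nu^2}(v|_T) \;=\; |F|\,\frac{\partial^2 v|_T}{\partial\nu^2}(b_k^\pm),
\end{equation*}
where $b_k^\pm$ is the barycenter of $F$. But $\frac{\partial^2 v}{\partial\nu^2}(b_k^\pm)$ is precisely one of the degrees of freedom that $V_h$ forces to be single-valued across the interior face $F$, so the value obtained from $T$ equals that obtained from $T'$; this proves \eqref{eq:Morley-nn}. If instead $F\subset\partial\Omega$ and $v\in V_{h0}$, this degree of freedom is prescribed to vanish, and \eqref{eq:Morley-nn-bc} follows.

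The delicate point is the first step. In contrast with Lemma \ref{lm:Morley-tt}, one cannot merely combine \eqref{eq:Morley-Adini} with the continuity of the Adini space, because the Adini element is only $C^0$ and its second normal derivative is not continuous across faces even in an integral sense; the argument must exploit the explicit structure of $\mathcal{P}_M(T)$ --- in particular that the only tangential dependence of $\partial^2_{x_k x_k}v|_{F_k^\pm}$ originates from the $Q_1(T)\cdot x_k^2$ block and is multilinear --- so that the face integral degenerates to the barycenter value. The remaining computations are routine.
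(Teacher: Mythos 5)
Your proof is correct, and it lands on the same pivotal identity as the paper's proof --- namely that
\begin{equation*}
\int_F \dfrac{\partial^2 v}{\partial \nu^2}\,\mathrm{d}S = |F|\,\dfrac{\partial^2 v}{\partial \nu^2}(b),
\end{equation*}
with $b$ the barycenter of $F$, so that the face integral is controlled by the one degree of freedom that $V_h$ forces to be single-valued across $F$ (and that $V_{h0}$ forces to vanish on $\partial\Omega$) --- but you reach it by a different mechanism. The paper computes directly with the explicit basis functions \eqref{eq:Morely-basis}: it writes out the traces of $\partial^2 p_{0i}/\partial\nu^2$ and $\partial^2 p_{ji}/\partial\nu^2$ on each face $F_k^\pm$, checks that their face integrals vanish, and verifies that $\int_{F_k^\pm}\partial^2 r_j^\pm/\partial x_j^2$ equals $|F_k^\pm|$ when $j=k$ (with the matching sign) and zero otherwise, so that only the barycenter DoF survives in the expansion of $v$. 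You instead argue structurally from the definition \eqref{eq:Morley-shape} of $\mathcal{P}_M(T)$ that the trace of $\partial^2 v/\partial x_k^2$ on $F_k^\pm$ lies in $Q_1^{\hat{k}}(F_k^\pm)$, and then invoke the fact that the mean of a multilinear function over a box is its value at the center; your case-by-case check of the generators is accurate. Your route avoids the explicit basis functions entirely, makes it more transparent why only the barycenter value matters, and would adapt to variants of the shape function space, while the paper's computation is self-contained given the formulas it has already displayed. Your closing remark --- that the mechanism of Lemma \ref{lm:Morley-tt} via \eqref{eq:Morley-Adini} cannot be reused here because the Adini part of $v$ has no continuity of the mean second normal derivative --- is also correct and is exactly why the barycenter DoF is part of $\mathcal{N}_T$.
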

\begin{proof}
On any face $F_k^\pm$, we have 
\begin{equation}
\left.\dfrac{\partial^2 p_{0i}}{\partial \nu^2}\right|_{F_k^\pm} = \left.\dfrac{\partial^2 p_{0i}}{\partial x_k^2}\right|_{\xi_k=\pm 1} =  \mp\dfrac{3}{2^n}\xi_{ik}\prod_{j\neq k}(1+\xi_{ij}\xi_j) \pm \dfrac{3}{2^n} \xi_{ik},
\end{equation}
and for $p_{ji}$ with $1\leq j\leq n$,
\begin{equation}
\left.\dfrac{\partial^2 p_{ji}}{\partial \nu^2}\right|_{F_k^\pm} = \left.\dfrac{\partial^2 p_{ji}}{\partial x_k^2}\right|_{\xi_k=\pm 1} = \dfrac{1}{2^n}\left( \xi_{ik} \pm 3 \right)\prod_{j\neq k}(1+\xi_{ij}\xi_j) - \dfrac{1}{2^n}\left( \xi_{ik} \pm 3 \right).
\end{equation}
A straightforward computation gives 
\begin{equation}
\int_{F_{k}^{\pm}} \dfrac{\partial^2 p_{ji}}{\partial \nu^2} = 0,\quad 0\leq j\leq n.
\end{equation}
Moreover, we also have 
\begin{equation}
\int_{F_{k}^{+}} \dfrac{\partial^2 r_{j}^{+}}{\partial x_j^2} = 
\begin{cases}|F_{k}^{+}|, & j=k \\ 0, & \text { otherwise }\end{cases},\quad
\int_{F_{k}^{-}} \dfrac{\partial^2 r_{j}^{-}}{\partial x_j^2} = 
\begin{cases}|F_{k}^{-}|, & j=k \\ 0, & \text { otherwise }\end{cases},
\end{equation}
and 
\begin{equation}
\int_{F_{k}^{+}} \dfrac{\partial^2 r_{j}^{-}}{\partial x_j^2} = \int_{F_{k}^{-}} \dfrac{\partial^2 r_{j}^{+}}{\partial x_j^2} = 0,
\end{equation}
for all $1\leq j \leq n$. This gives the desired result. $\blacksquare$
\end{proof}

\subsection{The $n$-rectangle Adini-type element}
Define 
\begin{equation}\label{eq:shape-Adini}
\mathcal{P}_{A}(T) = Q_1(T)\cdot\operatorname{span}\lbrace 1, x_i^2, x_i^4~|~ 1 \leq i \leq n \rbrace.
\end{equation}
It is straightforward that $\mathcal{P}_3(T)\subset \mathcal{P}_A(T)$. The Adini-type element (see Fig.~\ref{fig:finite-element}) is then given by the triple $(T, \mathcal{P}_T, \mathcal{N}_T)$, where

\begin{figure}[H]
\centering
\subfloat[Rectangular element.]
{
\includegraphics[width=1.8in]{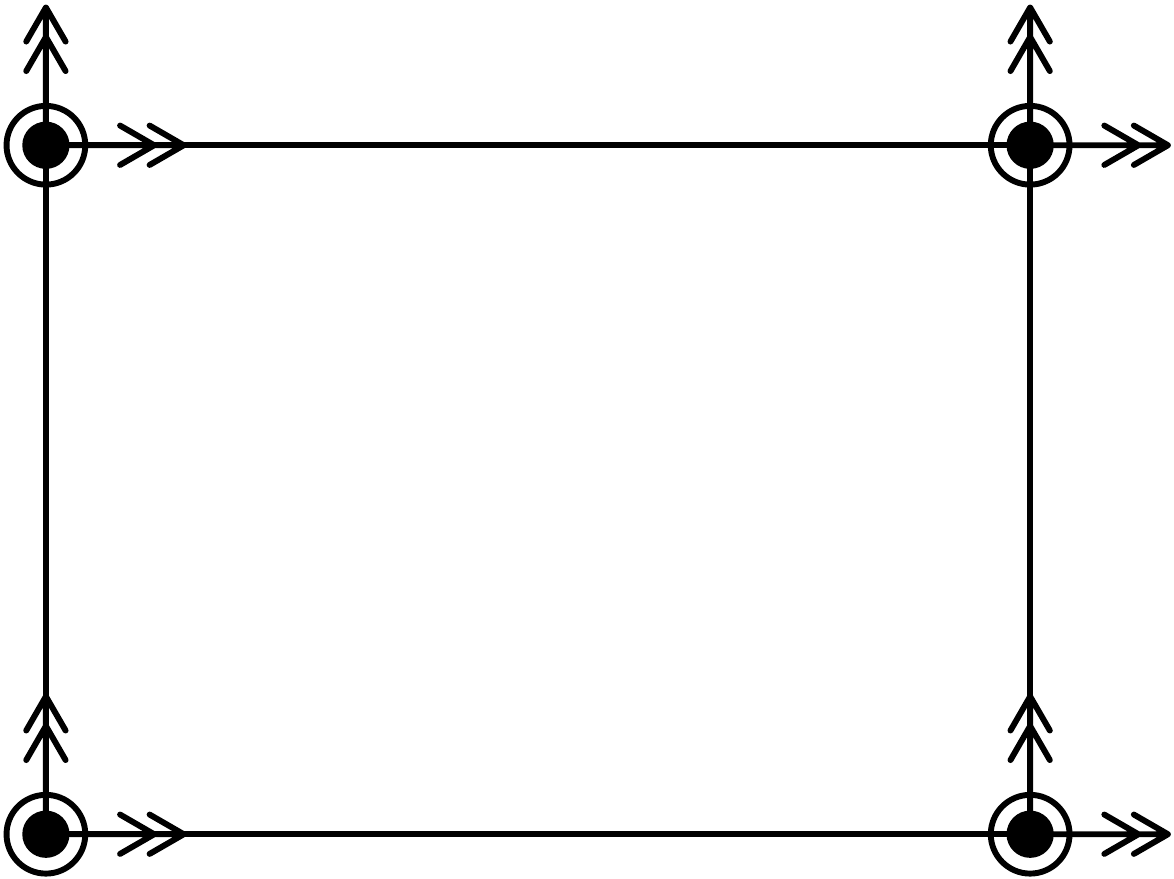} 
\label{fig:rect-element}
} \qquad 
\subfloat[Cubic element.]
{
\includegraphics[width=2.0in]{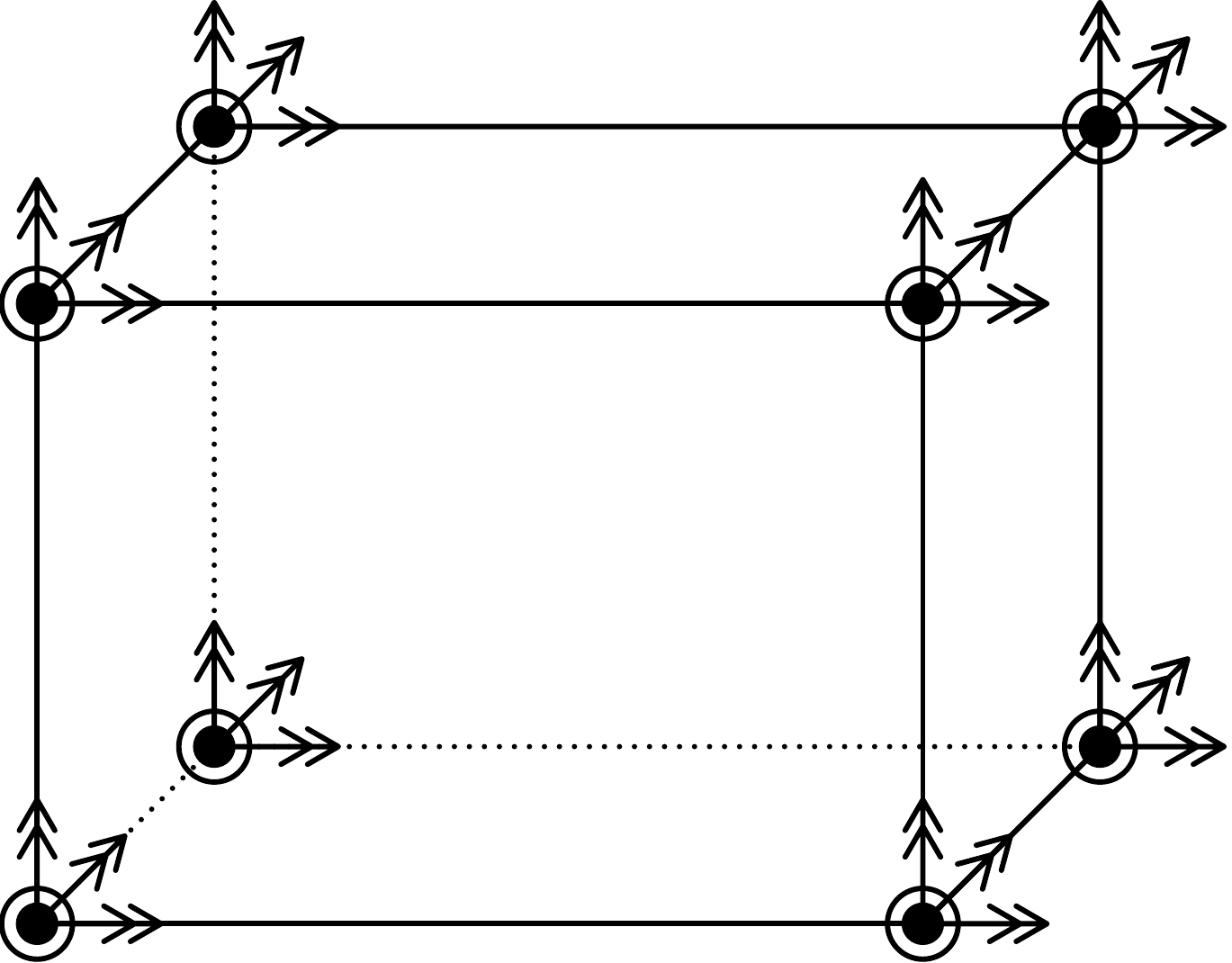} 
\label{fig:cubic-element}
} 
\caption{$H^3$-nonconforming Adini-type element} \label{fig:finite-element}.
\end{figure}

\begin{itemize}
\item $\mathcal{P}_T = \mathcal{P}_{A}(T)$.
\item For $v \in C^2(T)$, the vector $\mathcal{N}_T(v) $ of degree of freedom is
\begin{equation}
\mathcal{N}_T(v) = \left( v(a_1),\nabla v(a_1)^\top, D^2_p v(a_1)^\top, \cdots, v(a_{2^n}),\nabla v(a_{2^n})^\top, D^2_p v(a_{2^n})^\top \right)^\top,
\end{equation}
in which $D^2_p = \left(\dfrac{\partial^2}{\partial x_1^2}, \dfrac{\partial^2}{\partial x_2^2}, \cdots, \dfrac{\partial^2}{\partial x_n^2}\right)^\top$ denotes the vector of all pure second-order differential operators.
\end{itemize}

Instead of writing the explicit formulation of basis functions, below we show the unisolvent property of the Adini-type element using an inductive argument.
\begin{lemma}[Unisolvent property of the Adini-type element] 
For the $n$-dimensional Adini-type element, $\mathcal{N}_T$ is $\mathcal{P}_T$-unisolvent. 
\end{lemma}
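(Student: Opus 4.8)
The plan is to argue by induction on the dimension $n$, after two routine reductions. First, coordinate-wise affine maps $x_i = c_i + h_i\xi_i$ send $\operatorname{span}\{1,x_i^2,x_i^4\}$ into $\operatorname{span}\{1,\xi_i^2,\xi_i^4\}$ modulo terms already in $Q_1$, so $\mathcal{P}_A(T) = Q_1(T)\cdot\operatorname{span}\{1,\xi_i^2,\xi_i^4\mid 1\le i\le n\}$ and $\mathcal{N}_T$ turns into the analogous DoF vector in the $\xi$-coordinates; hence it suffices to treat the reference cube $T=[-1,1]^n$. Second, the monomials $\xi^\alpha$, $\xi^\alpha\xi_i^2$, $\xi^\alpha\xi_i^4$ with $\alpha\in\{0,1\}^n$ span $\mathcal{P}_A(T)$ and are pairwise distinct (each has at most one coordinate with exponent $\ge 2$, and that coordinate, together with whether its exponent lies in $\{2,3\}$ or $\{4,5\}$, determines the type), so they form a basis and $\dim\mathcal{P}_A(T)=2^n(2n+1)=\#\mathcal{N}_T$. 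Thus unisolvence is equivalent to the implication ``$v\in\mathcal{P}_A(T)$ with $\mathcal{N}_T(v)=0 \Rightarrow v=0$.''

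For the base case $n=1$ we have $\mathcal{P}_A(T)=\mathcal{P}_5([-1,1])$ with DoFs the value, first and second derivative at $\pm1$; a quintic with a triple zero at each endpoint vanishes identically. For the inductive step, assume the $(n-1)$-dimensional Adini-type element is unisolvent and let $v\in\mathcal{P}_A(T)$ satisfy $\mathcal{N}_T(v)=0$. Restricting $v$ to a face $F_i^\pm$ sets $\xi_i=\pm1$, which kills the dependence of the generators $1,\xi_i^2,\xi_i^4$ on $\xi_i$ and leaves the others intact, so $v|_{F_i^\pm}\in Q_1(F_i^\pm)\cdot\operatorname{span}\{1,\xi_j^2,\xi_j^4\mid j\ne i\}$, which is exactly the $(n-1)$-dimensional Adini-type shape space on that face. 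Moreover the value, the tangential components of $\nabla v$, and the tangential pure second derivatives of $v$ at each vertex of $F_i^\pm$ — which are precisely the $(n-1)$-dimensional DoFs of $v|_{F_i^\pm}$ — are among the vanishing entries of $\mathcal{N}_T(v)$. By the induction hypothesis $v|_{F_i^\pm}=0$ for every $i$ and every sign, i.e. $v$ vanishes on $\partial T$.

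It remains to deduce $v\equiv0$ from $v\in\mathcal{P}_A(T)$ and $v|_{\partial T}=0$. Vanishing on the hyperplane $\{\xi_i=\pm1\}$ forces $(\xi_i\mp1)\mid v$, and since the $2n$ linear factors are pairwise coprime, $\Phi:=\prod_{i=1}^n(\xi_i^2-1)$ divides $v$, say $v=\Phi\,g$. Suppose $g\ne0$ and pick a monomial $c\,\xi^\gamma$ of $g$ of maximal total degree. In $\Phi\,g$ the monomial $\xi^{\gamma+(2,\dots,2)}$ receives the contribution $c$ from the top term $\prod_i\xi_i^2$ of $\Phi$, while any other term $\prod_{i\in S}\xi_i^2$ of $\Phi$ with $S\subsetneq[n]$ would have to pair with a monomial of $g$ of total degree $|\gamma|+2(n-|S|)>|\gamma|$, of which there are none. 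Hence $\xi^{\gamma+(2,\dots,2)}$ occurs in $v=\Phi\,g$ with coefficient $c\ne0$; but for $n\ge2$ this monomial has at least two coordinates with exponent $\ge2$, contradicting the monomial description of $\mathcal{P}_A(T)$. Therefore $g=0$ and $v=0$, which closes the induction.

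The step I expect to be the main obstacle is the last one: the face-restriction argument by itself only gives $v|_{\partial T}=0$, and one must then exploit the rather rigid structure of $\mathcal{P}_A(T)$ — at most one ``high'' coordinate per monomial, equivalently, that $\prod_i(\xi_i^2-1)$ and its nonzero multiples leave $\mathcal{P}_A(T)$ once $n\ge2$ — to upgrade this to $v\equiv0$; identifying and using that structural fact is the crux. The affine reduction, the dimension count, and the face restrictions are routine bookkeeping.
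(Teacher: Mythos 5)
Your proof is correct and follows essentially the same route as the paper's: induction on the dimension, restriction to the faces $F_i^\pm$ to invoke the $(n-1)$-dimensional case, and extraction of the factor $\prod_{i=1}^n(\xi_i^2-1)$. The only difference is that you spell out the details the paper leaves implicit — the monomial basis giving $\dim\mathcal{P}_A(T)=2^n(2n+1)$, the $n=1$ base case, and in particular the leading-monomial argument showing that no nonzero multiple of $\prod_i(\xi_i^2-1)$ lies in $\mathcal{P}_A(T)$ for $n\ge 2$, which the paper compresses into ``Consequently, $v=0$.''
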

\begin{proof}
Since the dimensions of both $\mathcal{P}_A(T)$ and the number of DoFs are $2^n(2n+1)$, it suffices to show that if $v \in \mathcal{P}_A(T)$ vanishes on $\mathcal{N}_T$ then $v = 0$. 

The case in which $n=1$ is standard. Assume that the conclusion is true for $n = k (k \geq 1)$. 

Now let $n = k + 1$. We write $v = v(\xi_1, \xi_2, \cdots, \xi_n)$. On the $k$-dimensional face $F_i^\pm$ on which $\xi_i = \pm 1$, $v$ is a polynomial of $\xi_1, \cdot, \xi_{i-1}, \xi_{i+1}, \cdots, \xi_n$ in $k$-dimensional shape function space $\mathcal{P}_A(F_i^\pm)$. Clearly, $\mathcal{N}_{F_i^\pm}(v)$, which consists of the point-values, gradients, and pure second-order derivatives at vertices of $F_i^\pm$, will vanish from the definition of $\mathcal{N}_T$. Hence, $v|_{F_i^\pm} = 0$ by the inductive assumption. This leads to a factor $\Pi_{i=1}^n(\xi_i^2 - 1)$ of $v$. Consequently, $v = 0$.  $\blacksquare$
\end{proof}

We define the finite element space $V_h$ and $V_{h0}$ as follows: 
$V_{h} = \lbrace v_h\in L^2(\Omega):~ v_h|_{T} \in P_A(T), ~ v_h, \frac{\partial v_h}{\partial x_j}, \frac{\partial^2 v_h}{\partial x_j^2}$ are continuous at all vertices of elements in $\mathcal{T}_h, 1\leq j\leq n \rbrace$, 
and $V_{h0} = \lbrace v_h\in V_h:~  v_h, \frac{\partial v_h}{\partial x_j}, \frac{\partial^2 v_h}{\partial x_j^2}$ vanish at vertices along $\partial \Omega\rbrace$.

From the proof of unisolvent property, we directly see that $V_h \subset H^1(\Omega)$ and $V_{h0} \subset H_0^1(\Omega)$. In fact, when restricting $v\in V_h$ on an $(n-1)$-dimensional face $F$, $v|_F$ is uniquely defined $\mathcal{N}_F$, which yields the continuity of $v$. Further, if $v \in V_{h0}$ and $F\subset \partial \Omega$, then $v|_F = 0$. 

\begin{lemma}[normal-normal strong continuity for Adini] \label{lm:Adini-nn}
Let $V_h$ and $V_{h0}$ be the finite element spaces of the $n$-rectangle Adini-type element. Then, 
\begin{equation} \label{eq:Adini-nn}
\left.\dfrac{\partial^2}{\partial \nu^2} (v|_T)\right|_F = 
\left.\dfrac{\partial^2}{\partial \nu^2} (v|_{T'})\right|_F \quad \forall v \in V_h,
\end{equation}
where $T,T' \in \mathcal{T}_h$ share a common $(n-1)$-dimensional interior face $F$. Moreover, if an $(n-1)$-dimensional face $F$ of $T \in \mathcal{T}_h$ is on $\partial \Omega$, then 
\begin{equation} \label{eq:Adini-nn-bc}
\left.\dfrac{\partial^2}{\partial \nu^2} (v|_T)\right|_F = 0 \quad \forall v \in V_{h0}.
\end{equation}
\end{lemma}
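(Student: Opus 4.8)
The plan is to show that the pure second derivative $\partial^2 w/\partial x_i^2$ of any $w\in\mathcal{P}_A(T)$, once restricted to a face perpendicular to the $x_i$-axis, is a multilinear polynomial in the remaining coordinates, hence is completely determined by its values at the vertices of that face; since those values are among the degrees of freedom, the asserted continuity will follow. First I would fix an interior face $F$ shared by $T$ and $T'$. Because $\mathcal{T}_h$ is an $n$-rectangle mesh, $F$ is axis-aligned, and after relabeling the coordinates we may assume $F$ is perpendicular to the $x_i$-axis (say $F=F_i^{+}$ for $T$ and a face $\{\xi_i=-1\}$ for $T'$), so that on $F$ one has $\partial^2/\partial\nu^2=\partial^2/\partial x_i^2$ for both elements. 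The key claim is
\begin{equation*}
\left.\frac{\partial^2 w}{\partial x_i^2}\right|_{F}\in Q_1^{\hat{i}}(F)\qquad\forall\,w\in\mathcal{P}_A(T).
\end{equation*}

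To establish the claim I would use the product structure $\mathcal{P}_A(T)=Q_1(T)\cdot\operatorname{span}\{1,x_j^2,x_j^4\mid 1\le j\le n\}$. Writing any factor $q\in Q_1(T)$ as $q=q^{(0)}(x')+x_i\,q^{(1)}(x')$ with $q^{(0)},q^{(1)}\in Q_1^{\hat{i}}(T)$ and $x'=(x_1,\dots,\widehat{x_i},\dots,x_n)$, one checks directly that, viewed as a polynomial in $x_i$, every $w\in\mathcal{P}_A(T)$ takes the form $w=\sum_{k=0}^{5}c_k(x')\,x_i^k$ with all $c_k\in Q_1^{\hat{i}}(T)$: only the products of a $Q_1(T)$-factor with $x_i^2$ or $x_i^4$ can produce the powers $x_i^2,\dots,x_i^5$, and they do so with $Q_1^{\hat{i}}(T)$-valued coefficients, while products with $1$, $x_j^2$, $x_j^4$ for $j\neq i$ contribute only to $c_0,c_1$. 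Hence $\partial^2 w/\partial x_i^2=\sum_{k=2}^{5}k(k-1)\,c_k\,x_i^{k-2}$, and evaluating on the hyperplane containing $F$ (where $x_i$ is constant) leaves a fixed linear combination of $c_2,\dots,c_5$, which lies in $Q_1^{\hat{i}}(F)$.

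Next, a function in $Q_1^{\hat{i}}(F)$ has degree at most one in each of the $n-1$ active coordinates on $F$, so by the standard $Q_1$-unisolvence on the $(n-1)$-dimensional cube it is uniquely determined by its values at the $2^{n-1}$ vertices of $F$. Those values are exactly the degrees of freedom $\frac{\partial^2 v}{\partial x_i^2}(a)$ at the vertices $a$ of $F$, which by the definition of $V_h$ coincide whether computed from $v|_T$ or from $v|_{T'}$; hence $\frac{\partial^2}{\partial x_i^2}(v|_T)$ and $\frac{\partial^2}{\partial x_i^2}(v|_{T'})$ agree on all of $F$, which is \eqref{eq:Adini-nn}. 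If instead $F\subset\partial\Omega$ and $v\in V_{h0}$, all these vertex degrees of freedom vanish, so the $Q_1^{\hat{i}}(F)$ function $\frac{\partial^2}{\partial x_i^2}(v|_T)|_F$ is identically zero, giving \eqref{eq:Adini-nn-bc}. I expect the only point requiring genuine care to be the degree bookkeeping in the key claim --- confirming that no cross term in $Q_1(T)\cdot\operatorname{span}\{1,x_j^2,x_j^4\}$ raises the $x_i$-degree in a way not captured by $Q_1^{\hat{i}}(T)$-valued coefficients; everything else is the elementary unisolvence of $Q_1$ on a cube together with the inter-element continuity built into $V_h$.
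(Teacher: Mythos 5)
Your proof is correct and follows essentially the same route as the paper's: both arguments reduce to showing that $\partial^2 v/\partial x_i^2$ restricted to a face perpendicular to the $x_i$-axis is multilinear in the remaining coordinates and hence determined by the shared vertex DoFs (the paper phrases this by inserting the globally continuous $Q_1$ interpolant $\Pi_h^{\bm{0}}\frac{\partial^2 v}{\partial x_i^2}$ and noting the difference lies in $Q_1(F)$ and vanishes at the vertices of $F$). One small overstatement in your key claim: $c_0$ and $c_1$ need not lie in $Q_1^{\hat{i}}(T)$ (e.g.\ $w=x_j^4$ gives $c_0=x_j^4$), but since only $c_2,\dots,c_5$ enter $\partial^2 w/\partial x_i^2$, this does not affect the argument.
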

\begin{proof}
We prove the case for $F_{T,i}^{\pm}$ in which $\frac{\partial^2}{\partial \nu^2} = \frac{\partial^2}{\partial x_i^2}$. Recall that $\Pi_h^{\bm{0}}$ is the global $n$-linear interpolation operator to $Q_1$-FEM space, the pure second-order derivatives at vertices belong to the DoFs of the Adini-type element, then $\Pi_h^{\bm{0}} \frac{\partial^2 v}{\partial x_i^2} \in  H^1(\Omega)$. 

Since $v|_T \in Q_1(T)\cdot \operatorname{span}\lbrace 1,x_j^2,x_j^4 ~|~ 1\leq j \leq n \rbrace$, then we have $\frac{\partial^2 (v|_T)}{\partial x_i^2} \in Q_1(T)\cdot \operatorname{span}\lbrace 1,\xi_i^2 \rbrace$ and whence
$$
\left(\dfrac{\partial^2 v}{\partial x_i^2} - \Pi_h^{\bm{0}} \dfrac{\partial^2 v}{\partial x_i^2}\right)\Big|_{F_{T,i}^{\pm}} \in Q_1(F_{T,i}^{\pm}).
$$
Notice that the left-hand side vanishes at all vertices of $F_{T,i}^{\pm}$, which leads to
\begin{equation}
\left. \dfrac{\partial^2 v}{\partial x_i^2}\right|_{F_{T,i}^{\pm}} = \left.\Pi_h^{\bm{0}} \dfrac{\partial^2 v}{\partial x_i^2} \right|_{F_{T,i}^{\pm}} 
\end{equation}
For $v \in V_{h0}$, we have $\Pi_h^{\bm{0}} \frac{\partial^2 v}{\partial x_i^2} \in H_0^1(\Omega)$, which leads to \eqref{eq:Adini-nn-bc}.  $\blacksquare$
\end{proof}

\section{Approximation Property} \label{sc:approximation}

In this section, we consider the approximation property of the Adini-type element and the Morely-type element. The interpolation error analysis of these finite element spaces in any dimension is established by using the projection-averaging technique. In section 3.2 we extend our investigation to some conforming relatives. Following similar ideas, we sketch the proofs of the error estimate and the stability of the conforming interpolation operator.

\subsection{Interpolation error of the $H^3$ nonconforming element}
In this section, we will analyze the approximation property of the finite element spaces $V_h$ and $V_{h0}$. To start with, we have the following result for low-dimensional cases.

\begin{theorem}\label{interpolation-for-nleq3}
Let $\Pi_T$ be the interpolation operator of the $n$-rectangle Morley-type element or the $n$-rectangle Adini-type finite element. If $n\leq 3$ then for any $T \in \mathcal{T}_h$, 
\begin{equation}
|v - \Pi_T v|_{m,T} \lesssim h^{4-m} |v|_{4,T} \quad 0\leq m\leq 4, ~ \forall v \in H^4(T). 
\end{equation}
\end{theorem}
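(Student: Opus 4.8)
The plan is to run the classical scaling plus Bramble--Hilbert argument, the only two structural inputs being the facts already recorded in the excerpt: $\mathcal{P}_3(T)\subset\mathcal{P}_T$ for both families, and the unisolvence of $\mathcal{N}_T$ over $\mathcal{P}_T$.

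\textbf{Well-definedness and polynomial reproduction.} Since $n\le 3$, the Sobolev embedding $H^4(T)\hookrightarrow C^2(\overline{T})$ holds, so each functional occurring in $\mathcal{N}_T$ --- point values, gradients and pure second-order derivatives at the vertices $a_i$, together with (in the Morley case) the $\partial_{\nu\nu}$ values at the face barycenters $b_k^{\pm}$ --- is a bounded linear functional on $H^4(T)$. Hence $\Pi_T\colon H^4(T)\to\mathcal{P}_T$ is a bounded linear operator. Moreover, if $p\in\mathcal{P}_3(T)$ then $p\in\mathcal{P}_T$ and $\Pi_T p\in\mathcal{P}_T$ carry the same DoFs, so $\Pi_T p=p$ by unisolvence; thus $I-\Pi_T$ annihilates $\mathcal{P}_3(T)$.

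\textbf{Reduction to a reference cube.} Let $\hat T=[-1,1]^n$ and let $x_i=c_i+h_i\xi_i$ be the affine diagonal map, with $\hat v(\xi):=v(x)$. Because $\mathcal{P}_T$ is invariant under this scaling, the pullback $\widehat{\Pi_T v}$ lies in $\mathcal{P}_T(\hat T)$; checking the DoFs one at a time (e.g. $\partial_{\xi_j}\widehat{\Pi_T v}(\hat a_i)=h_j\,\partial_{x_j}(\Pi_T v)(a_i)=h_j\,\partial_{x_j}v(a_i)=\partial_{\xi_j}\hat v(\hat a_i)$, and similarly for the second-order DoFs) shows that the $h_i$-weights cancel, so $\widehat{\Pi_T v}=\hat\Pi\hat v$, where $\hat\Pi$ is the fixed reference interpolation operator on $\hat T$. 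By the previous paragraph applied on $\hat T$, $\hat\Pi$ is a bounded linear projection of $H^4(\hat T)$ onto $\mathcal{P}_T(\hat T)$ that reproduces $\mathcal{P}_3(\hat T)$, so $I-\hat\Pi\in\mathcal{L}\bigl(H^4(\hat T),H^m(\hat T)\bigr)$ for $0\le m\le4$ and vanishes on $\mathcal{P}_3(\hat T)$. The Bramble--Hilbert lemma then gives
\[
|\hat v-\hat\Pi\hat v|_{m,\hat T}\ \lesssim\ \inf_{p\in\mathcal{P}_3(\hat T)}\|\hat v-p\|_{4,\hat T}\ \lesssim\ |\hat v|_{4,\hat T},\qquad 0\le m\le4,\ \forall\,\hat v\in H^4(\hat T).
\]
Finally, the standard scaling identities, using quasi-uniformity ($h_i\sim h$), read $|w|_{m,T}\sim h^{\,n/2-m}|\hat w|_{m,\hat T}$, and combining them yields
\[
|v-\Pi_T v|_{m,T}\ \sim\ h^{\,n/2-m}\,|\hat v-\hat\Pi\hat v|_{m,\hat T}\ \lesssim\ h^{\,n/2-m}\,|\hat v|_{4,\hat T}\ \sim\ h^{\,n/2-m}\,h^{\,4-n/2}\,|v|_{4,T}\ =\ h^{\,4-m}\,|v|_{4,T},
\]
which is the asserted estimate. (One could keep the $h_i$ separate via an anisotropic Bramble--Hilbert estimate, but this is unnecessary for a quasi-uniform mesh.)

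\textbf{Main obstacle.} The one genuinely delicate point is the boundedness of $\hat\Pi$ on $H^4(\hat T)$, i.e.\ that all DoFs --- in particular the pointwise second-order derivatives --- are continuous functionals on $H^4$; this is precisely where the hypothesis $n\le3$ is used, through $H^4\hookrightarrow C^2$. For $n\ge4$ the estimate fails at this regularity and one must replace $\Pi_T$ by the projection-averaging interpolant, which is the subject of the subsequent subsection. Everything else is routine: the $\mathcal{P}_3$-reproduction is immediate from $\mathcal{P}_3(T)\subset\mathcal{P}_T$ together with unisolvence, the commutation of the interpolation with the affine pullback is a direct DoF-by-DoF check, and the scaling relations are classical.
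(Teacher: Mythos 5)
Your argument is correct and is precisely the ``standard interpolation theory'' that the paper invokes without writing out: the paper simply cites Ciarlet for this theorem, and your write-up supplies exactly the expected ingredients ($H^4\hookrightarrow C^2(\overline{T})$ for $n\le 3$ to make the DoFs bounded, $\mathcal{P}_3(T)\subset\mathcal{P}_T$ plus unisolvence for polynomial reproduction, commutation with the diagonal affine map, and Bramble--Hilbert with scaling). No discrepancy with the paper's (implicit) proof.
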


Theorem \ref{interpolation-for-nleq3} can be obtained from the standard interpolation theory (c.f. \cite{ciarlet2002finite}) and the result is already enough for practical cases. However, we are interested in attaining similar results for a more generic case in which $n \geq 2$.

\begin{theorem}[approximation property] \label{thm:approx-of-Vh-Vh0}
Let $V_h$ and $V_{h0}$ be the finite element spaces of the $n$-rectangle Morley-type element or the $n$-rectangle Adini-type element. Then, for any $s\in [0,1]$, 
\begin{align}
&\mathop{\inf}_{v_h \in V_h} \sum_{m=0}^{3} h^m |v-v_h|_{m,h} \lesssim h^{3+s} |v|_{3+s,\Omega} \quad 
\forall v \in H^{3+s}(\Omega)\label{eq:approx-Vh}, \\  
&\mathop{\inf}_{v_h \in V_{h0}} \sum_{m=0}^{3} h^m |v-v_h|_{m,h} \lesssim h^{3+s} |v|_{3+s,\Omega} \quad 
\forall v \in H^{3+s}(\Omega)\cap H_0^3(\Omega).\label{eq:approx-Vh0}
\end{align}
\end{theorem}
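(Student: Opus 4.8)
The plan is to exhibit, for each $s\in[0,1]$, a single quasi-interpolation operator $\Pi_h: H^{3+s}(\Omega)\to V_h$ (and $H^{3+s}(\Omega)\cap H_0^3(\Omega)\to V_{h0}$) attaining the claimed rate, so that $v_h:=\Pi_h v$ is an admissible competitor in the infimum. Since both norms decouple over $\mathcal T_h$, it suffices to prove the local estimate
\begin{equation*}
\sum_{m=0}^{3} h^{m}\,|v-\Pi_h v|_{m,T}\ \lesssim\ h^{3+s}\,|v|_{3+s,\omega_T},\qquad T\in\mathcal T_h,
\end{equation*}
where $\omega_T$ is the patch of elements meeting $T$ in at least one vertex; summing over $T$ and invoking the finite overlap of the patches then yields \eqref{eq:approx-Vh}. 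By operator interpolation between the two integer endpoints $s=0$ and $s=1$ it is moreover enough to treat those two cases.

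The canonical interpolation \eqref{eq:global-Q1-Pi} is unusable here, because its DoFs — point values of $v$, $\nabla v$, and (for the Adini-type element) $D^2_p v$ — are not bounded functionals on $H^{3+s}(T)$ once $3+s\le n/2$. Following the projection-averaging idea of \cite{wang2007some}, I would instead \emph{define} $\Pi_h v$ through averaged DoFs. Let $P_{T'}: L^2(T')\to\mathcal P_3(T')$ be the $L^2$-orthogonal projection. For a DoF of the form $\partial^\alpha(\cdot)(a)$ at a vertex $a$ (with $|\alpha|\le1$, or $|\alpha|\le2$ for the Adini-type element) I assign to every element containing $a$ the common value obtained by averaging $\partial^\alpha(P_{T'}v)(a)$ over all $T'\ni a$; for the Morley-type $\partial_{\nu\nu}(\cdot)(b)$ DoF at the barycenter $b$ of an interior face $F=T\cap T'$ I assign to both $T$ and $T'$ the value $\tfrac12\big(\partial_{\nu\nu}(P_Tv)(b)+\partial_{\nu\nu}(P_{T'}v)(b)\big)$ (well defined, since $\partial_{\nu\nu}$ is sign-independent). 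Because shared DoFs receive one common value, $\Pi_h v\in V_h$; because each $P_{T'}$ reproduces $\mathcal P_3(T')$ and $\mathcal P_3\subset\mathcal P_M(T),\mathcal P_A(T)$, the operator $\Pi_h$ reproduces $\mathcal P_3(\Omega)$. For $V_{h0}$ I use the same construction but force every DoF attached to a boundary vertex, or to a boundary face barycenter, to vanish, so that $\Pi_h v\in V_{h0}$.

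On a fixed $T$ I would split $v-\Pi_h v=(v-P_Tv)+(P_Tv-\Pi_h v)$. The first term is the classical scaled Bramble–Hilbert estimate (cf. \cite{brenner2008mathematical}): $|v-P_Tv|_{m,T}\lesssim h^{3+s-m}|v|_{3+s,T}$. For the second, $w:=(P_Tv-\Pi_h v)|_T$ lies in the fixed finite-dimensional space $\mathcal P_M(T)$ (resp. $\mathcal P_A(T)$), so norm equivalence together with the $h$-scaling of the dual basis \eqref{eq:Morely-basis} bounds $|w|_{m,T}$ by the DoF values of $w$ weighted by suitable powers of $h$. Each such value is, by construction, an average of differences $\partial^\alpha(P_Tv)(p)-\partial^\alpha(P_{T'}v)(p)$ at a point $p$ on a common face $F$; restricting the degree-$\le 3$ polynomials to $F$, an inverse estimate on $F$ controls this by $h^{-(n-1)/2}\|\partial^\alpha(P_Tv-P_{T'}v)\|_{0,F}$, and then a scaled trace inequality plus the Bramble–Hilbert bounds for $P_Tv-v$ and $P_{T'}v-v$ give $|\partial^\alpha(P_Tv)(p)-\partial^\alpha(P_{T'}v)(p)|\lesssim h^{3+s-|\alpha|-n/2}|v|_{3+s,\omega_T}$. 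Multiplying by the $h$-power carried by the dual basis makes every contribution collapse to $h^{3+s-m}|v|_{3+s,\omega_T}$. In the $V_{h0}$ case the forced-zero boundary DoFs produce instead terms $|\partial^\alpha(P_{T'}v)(p)|$ at boundary points; since $v\in H_0^3(\Omega)$ makes the traces of $\partial^\alpha v$ ($|\alpha|\le2$) vanish on $\partial\Omega$, the identical inverse/trace/Bramble–Hilbert chain (now with $v$ in place of the absent neighbour) absorbs them with the same scaling.

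The essential difficulty is the second term, the averaging discrepancy $P_Tv-\Pi_h v$: one must verify, with constants independent of $n$, that the three scaling mechanisms — an inverse estimate reconstructing a polynomial from point-derivative DoFs, a trace inequality passing from the $(n-1)$-dimensional faces to the $n$-dimensional cells, and the Bramble–Hilbert estimate on the patch $\omega_T$ — combine so that every power of $h$ cancels exactly for all $0\le m\le 3$ and all $|\alpha|\le2$. The $\partial_{\nu\nu}$ face DoFs of the Morley-type element and the boundary bookkeeping for $V_{h0}$ are the only genuinely delicate points; the remainder is routine scaled approximation theory.
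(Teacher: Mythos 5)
Your proposal follows essentially the same projection--averaging strategy as the paper's proof: local $L^2$-projections, averaging of the DoFs over vertex patches, a scaling/norm-equivalence bound for the discrepancy $P_Tv-\Pi_h v$ in terms of its DoF values, inverse-plus-trace estimates on shared faces combined with Bramble--Hilbert, and forced-zero boundary DoFs absorbed by the vanishing traces of $\partial^\alpha v$ for $v\in H_0^3(\Omega)$. The only detail worth tightening is that two elements of a vertex patch need not share an $(n-1)$-dimensional face, so the difference $\partial^\alpha(P_Tv)(a)-\partial^\alpha(P_{T'}v)(a)$ must be telescoped through a chain of face-adjacent elements containing $a$, exactly as the paper does.
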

\begin{proof}
The proof is based on the well-established projection-averaging technique (c.f. \cite{wang2007some}). For conciseness and completeness, we present the proof of \eqref{eq:approx-Vh0} for the $n$-rectangle Adini-type element. For a function $v\in H^{3+s}(\Omega)\cap H_0^3(\Omega)$, we define $w_h \in L^2(\Omega)$ as the $L^2$-projection of $v$ onto $\mathcal{P}_A(T)$ for each $T \in \mathcal{T}_h$, namely,
\begin{equation*}
w_h|_{T} \in \mathcal{P}_A(T)~\text{and}~ \int_{T} w_hq\,\mathrm{d}x = \int_{T} vq \,\mathrm{d}x, \quad \forall q \in \mathcal{P}_A(T), ~T\in\mathcal{T}_h.
\end{equation*}
Since $\mathcal{P}_3(T) \subset \mathcal{P}_A(T)$, then the standard interpolation theory of $L^2$-projection \cite{brenner2008mathematical} gives the following bound:
\begin{equation}\label{err-L2-projection}
|v-w_h|_{m,T} \lesssim h^{3+s-m}|v|_{3+s,T}, \quad 0\leq m \leq 3, ~T \in \mathcal{T}_h.
\end{equation}

Given a set $B\subset \mathbb{R}^n$, define $\mathcal{T}_h(B)= \lbrace T\in \mathcal{T}_h: ~T \cap B \neq \varnothing \rbrace$ and let $N_h(B)$ be the number of elements in $\mathcal{T}_h(B)$. In what follows, we will use the notation $w_h^T = w_h|_T$ for simplicity. Now we define the interpolation $v_h \in V_{h0}$ by taking the average of the DoFs.  For $a_i$ being an interior vertex of $\Omega$, let
\begin{align}
 v_h(a_i) &:= \dfrac{1}{N_h(a_i)} \sum_{T^{\prime} \in \mathcal{T}_h(a_i)} w_h^{T^{\prime}}(a_i), \quad i = 1,2,\cdots,2^n, \\
 \dfrac{\partial v_h(a_i)}{\partial x_j} &:= \dfrac{1}{N_h(a_i)} \sum_{T^{\prime} \in \mathcal{T}_h(a_i)} \dfrac{\partial w_h^{T^{\prime}}(a_i)}{\partial x_j}, \quad i = 1,2,\cdots,2^n, \quad j = 1,2,\cdots,n,\\
 \dfrac{\partial^2 v_h(a_i)}{\partial x_j^2} &:= \dfrac{1}{N_h(a_i)} \sum_{T^{\prime} \in \mathcal{T}_h(a_i)} \dfrac{\partial^2 w_h^{T^{\prime}}(a_i)}{\partial x_j^2}, \quad i = 1,2,\cdots,2^n, \quad j = 1,2,\cdots,n.
\end{align}
Let $\phi_h:=w_h - v_h$ and obviously $\phi_h^T \in \mathcal{P}_A(T)$ on each $T\in \mathcal{T}_h$. By a standard scaling argument, we find that, for $0 \leq m \leq 3$, 
\begin{equation}\label{scaling-argument}
\vert \phi_h \vert_{m,T}^2 \lesssim h^{n-2m} \left( \sum_{i=1}^{2^n} \vert \phi_h^T(a_i)\vert^2 
+ h^2 \sum_{i=1}^{2^n}\sum_{j=1}^{n} \Big\vert \dfrac{\partial \phi_h^T (a_i)}{\partial x_j} \Big\vert^2 
+ h^4\sum_{i=1}^{2^n}\sum_{j=1}^{n}\Big\vert \dfrac{\partial^2 \phi_h^T (a_i)}{\partial x_j^2} \Big\vert^2\right),
\end{equation}
Next we complete the proof by respectively estimating the terms $\vert \phi_h(a_i)\vert$, $\big\vert \frac{\partial \phi_h (a_i)}{\partial x_j} \big\vert$ and $\big\vert \frac{\partial^2 \phi_h (a_i)}{\partial x_j^2} \big\vert$ in \eqref{scaling-argument}. If $a_i\in T$ is an interior node of $\Omega$, by definition we have 
\begin{equation*}
\phi_h^T(a_i) = \dfrac{1}{N_h(a_i)} \sum_{T^{\prime}\in\mathcal{T}_h(a_i)}\left(w_h^T(a_i) - w_h^{T^{\prime}}(a_i) \right).
\end{equation*}
For any other element $T^{\prime}$ in the patch $\mathcal{T}_h(a_i)$, there exists an integer $J>0$ and $T_1,T_2,\cdots,T_J \in \mathcal{T}_h(a_i)$ such that $T_1=T$, $T_J=T^{\prime}$ and $\tilde{F}_j = T_{j} \cap T_{j+1}$ is a common $(n-1)$-dimensional surface of $T_j$ and $T_{j+1}$, with $a_i \in \tilde{F}_j,~1\leq j\leq J$. A simple computation with the inverse estimate gives
\begin{align*}
\vert w_h^T(a_i) - w_h^{T^{\prime}}(a_i) \vert^2 &= \Big\vert \sum_{j=1}^{J-1} \left(  w_h^{T_{j}}(a_i) - w_h^{T_{j+1}}(a_i) \right) \Big\vert^2 \\ 
&\lesssim h^{1-n}\sum_{j=1}^{J-1}  \| w_h^{T_{j}}- w_h^{T_{j+1}} \|_{0,\tilde{F}_j}^2 \\ 
&\lesssim h^{1-n} \sum_{j=1}^{J-1} \left( \| v-w_h^{T_j} \|_{0,\tilde{F}_j}^2 + \| v-w_h^{T_{j+1}} \|_{0,\tilde{F}_j}^2 \right).
\end{align*}
Taking $m=0,1$ in \eqref{err-L2-projection} and using the local trace theorem, we obtain that
\begin{equation*}
\| v-w_h^{T_j} \|_{0,\tilde{F}_j}^2 \lesssim h^{-1} \Vert v-w_h^{T_j} \Vert_{0,T_j}^2 + h \vert v-w_h^{T_j} \vert_{1,T_j}^2 \lesssim h^{5+2s} \vert v \vert_{3+s,T_j}.
\end{equation*}
Since the values $J$ and $N_h(a_i)$ are uniformly bounded for any interior vertex $a_i$ in $\Omega$, then it is concluded that 
\begin{equation}\label{bound-vertex-value}
\vert \phi_h(a_i) \vert^2  \lesssim h^{6-n+2s} \sum_{T^{\prime} \in \mathcal{T}_h(a_i)} \vert v \vert_{3+s, T^{\prime}}^2.
\end{equation} 

If the vertex $a_i$ of $T$ is on the boundary $\partial \Omega$, then there exist $T^{\prime} \in \mathcal{T}_h(a_i)$ with an $(n-1)$-dimensional face $F \subset \partial \Omega$, such that $a_i \in F$. Therefore, we estimate $\phi_h$ by
\begin{equation*}
\vert \phi_h(a_i) \vert \leq \vert w_h^T(a_i) - w_h^{T^{\prime}}(a_i) \vert + \vert  w_h^{T^{\prime}} (a_i) \vert. 
\end{equation*}
The first term above in the right hand side can be handled with previous technique, and the inverse estimate gives the bound for the second term:
\begin{equation*}
\vert  w_h^{T^{\prime}} (a_i) \vert^2  \lesssim h^{1-n} \|  w_h^{T^{\prime}} \|_{0,F}^2 \eqsim h^{1-n} \|  v - w_h^{T^{\prime}} \|_{0,F}^2 \lesssim h^{6-n+2s} \vert v \vert_{3+s,T^{\prime}}^2.
\end{equation*} 
Therefore, \eqref{bound-vertex-value} also holds for vertices $a_i \in \partial \Omega$. It is noticed that the same analysis can be applied on $\big\vert \frac{\partial \phi_h (a_i)}{\partial x_j} \big\vert$ and $\big\vert \frac{\partial^2 \phi_h (a_i)}{\partial x_j^2} \big\vert$ so that we have the following estimates:
\begin{align}
\Big\vert \dfrac{\partial \phi_h (a_i)}{\partial x_j} \Big\vert^2 &\lesssim h^{4-n+2s} \sum_{T^{\prime}\in \mathcal{T}_h(a_i)} \vert v \vert_{3+s,T^{\prime}}^2, \quad i = 1,2,\cdots,2^n, \quad j = 1,2,\cdots,n,\label{bound-vertex-1stderivative}\\
\Big\vert \dfrac{\partial^2 \phi_h (a_i)}{\partial x_j^2} \Big\vert^2 &\lesssim h^{2-n+2s} \sum_{T^{\prime}\in \mathcal{T}_h(a_i)} \vert v \vert_{3+s,T^{\prime}}^2,\quad i = 1,2,\cdots,2^n, \quad j = 1,2,\cdots,n.\label{bound-vertex-2ndderivative}
\end{align}
Combining \eqref{scaling-argument} with \eqref{bound-vertex-value}-\eqref{bound-vertex-2ndderivative}, and summing over $T\in\mathcal{T}_h$, we have, for $0 \leq m \leq 3$
\begin{equation}\label{bound-phi}
h^{2m} \vert \phi_h \vert_{m, h}^2 \lesssim h^{6+2s} \vert v \vert_{3+s, \Omega}^2.
\end{equation}
The result \eqref{eq:approx-Vh0} follows from \eqref{bound-phi}, \eqref{err-L2-projection}, and the triangle inequality. $\blacksquare$
\end{proof}

\subsection{Conforming relatives}
Introduced by Brenner in \cite{brenner1996twolevel}, the conforming relative of a nonconforming finite element is verified to be capable of reducing the regularity requirements in the convergence analysis (e.g.~\cite{wu2019nonconforming}). Let us now consider a family of $H^3$ conforming elements on $n$ dimensional rectangle meshes. For any integers $k\geq 0$, define the set of degree of freedom of an $H^{k+1}$ $n$-rectangle finite element as follows.
\begin{equation}\label{eq:high-order-dof}
\mathcal{N}_T^k(v) = \Big\lbrace \dfrac{\partial^{\alpha} v}{\partial x^\alpha} (a_i)~:~ 0\leq \alpha_j \leq k, ~j = 1,2,\cdots,n, ~i = 1,2,\cdots, 2^n \Big\rbrace,
\end{equation}
where $a_i, 1\leq i \leq 2^n$ are vertices of the $n$-rectangle $T$. The corresponding shape function space of $\mathcal{N}_T^k$ on $T \in \mathcal{T}_h$ is therefore $Q_{2k+1}(T)$. Next we let $V_{h}^k$, $V_{h0}^k$ be the global finite element space on the domain $\Omega$. By regarding $\mathcal{N}_T^k$ as a tensor product of $n$ set of degree of freedoms of $(2k+1)$-th order Hermitian interpolation in one dimension,  it can be shown that $V_{h}^k \subset H^{k+1}(\Omega)$ through mathematical induction on the dimensionality $n$.

In the following we still borrow the notations of the projection-averaging strategy described in Theorem~\ref{thm:approx-of-Vh-Vh0} to construct the interpolation operators of functions with less smoothness. Based on the existence of the conforming relative with arbitrary regularities, we have following conclusion.
\begin{lemma}[Approximation property of $H^3$ conforming relative]\label{lemma: conforming-relative-H3}
There exists an $H^3$-conforming $n$-rectangle finite element space $V_h^c \subset H_0^3(\Omega)$ and an interpolation operator $\Pi_h^{c}: V_h \rightarrow V_h^c$ such that
\begin{equation}\label{err-conforming-relative-H3}
\sum_{m=0}^{3} h^{m-3} \vert v_h - \Pi_h^c v_h \vert_{m,h} \lesssim \vert v_h \vert_{3,h}, ~\forall v_h \in  V_h.
\end{equation}
\end{lemma}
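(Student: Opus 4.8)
We sketch the construction and the architecture of the estimate. For $V_h^c$ we take the $H^3$-conforming $n$-rectangle space given by \eqref{eq:high-order-dof} with $k=2$: its shape functions on $T$ are $Q_5(T)$, its DoFs are the derivatives $\partial^\alpha v(a_i)$ with $0\le\alpha_j\le 2$ at the $2^n$ vertices, and we assemble it with every DoF attached to a vertex on $\partial\Omega$ set to zero, so that $V_h^c\subset H_0^3(\Omega)$ by the tensor-product/induction argument recorded above. The operator $\Pi_h^c$ is defined by the projection-averaging recipe of Theorem~\ref{thm:approx-of-Vh-Vh0} applied directly to $v_h$: at an interior vertex $a$ each DoF $\partial^\alpha(\Pi_h^c v_h)(a)$ is the average of $\partial^\alpha(v_h|_{T'})(a)$ over $T'\in\mathcal{T}_h(a)$, and at a boundary vertex all DoFs of $\Pi_h^c v_h$ vanish.

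Fix $T\in\mathcal{T}_h$ and put $\psi_h:=v_h-\Pi_h^c v_h$. Since $\mathcal{P}_M(T)\subset Q_5(T)$ and $\mathcal{P}_A(T)\subset Q_5(T)$, we have $\psi_h|_T\in Q_5(T)$, and a scaling argument (passing to the reference cube $\{\,|\xi_i|\le 1\,\}$) together with the equivalence of norms on $Q_5$ gives, for $0\le m\le 3$,
\begin{equation*}
|\psi_h|_{m,T}^2 \lesssim h^{\,n-2m}\sum_{i=1}^{2^n}\ \sum_{0\le\alpha_j\le 2} h^{\,2|\alpha|}\,\big|\partial^\alpha(\psi_h|_T)(a_i)\big|^2 .
\end{equation*}
Multiplying by $h^{2m-6}$, summing over $T$, and using the finite overlap of the patches, it then suffices to prove, for each vertex $a_i$ of $T$ and each admissible $\alpha$, the bound $h^{|\alpha|}\,\big|\partial^\alpha(\psi_h|_T)(a_i)\big| \lesssim h^{\,3-n/2}\big(\sum_{T'\in\mathcal{T}_h(a_i)}|v_h|_{3,T'}^2\big)^{1/2}$.

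We would treat these vertex quantities in three regimes. First, if $\partial^\alpha$ is one of the degrees of freedom of the nonconforming element --- the value and the gradient components (both families), and additionally the pure second derivatives $\partial^2/\partial x_j^2$ for the Adini-type family --- then $\partial^\alpha v_h$ is single-valued at $a_i$, so the difference vanishes at interior vertices, and boundary vertices are handled exactly as in the corresponding step of Theorem~\ref{thm:approx-of-Vh-Vh0}. Second, if $|\alpha|\ge 3$, one writes $\partial^\alpha(\psi_h|_T)(a_i)$ as a telescoping sum of jumps of $\partial^\alpha v_h$ along a chain of cells joining $T$ to the other members of $\mathcal{T}_h(a_i)$, bounds each jump on its face by a trace inequality followed by the inverse inequality $\|\partial^\alpha v_h\|_{0,F}^2\lesssim h^{-1}\|\partial^\alpha v_h\|_{0,T'}^2\lesssim h^{-1-2(|\alpha|-3)}|v_h|_{3,T'}^2$, and passes to the point value by a local inverse inequality; the weight $h^{2|\alpha|}$ in the scaling bound then cancels precisely the negative powers of $h$ so produced. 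Third, there remain the mixed second derivatives $\partial_i\partial_j$ with $i\ne j$, which are single-valued in neither family; here the crude inverse-inequality bound falls one power of $h$ short, and one must use the weak continuity of the second-order derivatives of $v_h$: the jump of $\partial_i\partial_j v_h$ across each interior face has vanishing mean --- by the tangential-tangential and normal-normal identities of Lemmas~\ref{lm:Morley-tt}, \ref{lm:Morley-nn}, \ref{lm:Adini-nn} together with the corresponding tangential-normal identity --- so a Poincaré inequality on that $(n-1)$-face bounds it by $h$ times the surface norm of its tangential gradient, a third-order quantity, which supplies exactly the missing half power of $h$; the boundary vertices are dealt with similarly using the homogeneous versions of those identities.

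The step I expect to be the genuine obstacle is the third one. Unlike $\partial_{\tau\tau}$ and $\partial_{\nu\nu}$, whose weak continuity follows from a local computation with the basis functions, the jump of $\partial_i\partial_j v_h$ across a face perpendicular to $x_i$ or to $x_j$ is of tangential--normal type, and its vanishing-mean property cannot be obtained cell by cell --- it is precisely the tangential-normal weak continuity that is established through the exchange-of-sub-rectangles mechanism, so the present lemma ultimately rests on that ingredient. $\blacksquare$
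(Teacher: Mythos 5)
Your construction coincides exactly with the paper's: $k=2$ in \eqref{eq:high-order-dof}, shape functions $Q_5(T)$, the vertex DoFs of $\Pi_h^c v_h$ obtained by patch-averaging at interior vertices and set to zero at boundary vertices, followed by the scaling reduction to the quantities $h^{|\alpha|}|\partial^\alpha(v_h-\Pi_h^c v_h)(a_i)|$ and a telescoping over chains of cells. Your first two regimes are also sound: DoFs of the nonconforming element are single-valued so their differences vanish at interior vertices, and for $|\alpha|\ge 3$ the per-cell inverse estimate $|\partial^\alpha v_h(a_i)|\lesssim h^{-n/2}h^{3-|\alpha|}|v_h|_{3,T}$ already gives the target without any continuity. (The paper itself only records the construction and defers the estimate to Theorem \ref{thm:approx-of-Vh-Vh0}, so you are probing a step the authors do not write out; note, though, that the mechanism of Theorem \ref{thm:approx-of-Vh-Vh0} --- inserting the globally smooth $v$ between $w_h^{T_j}$ and $w_h^{T_{j+1}}$ --- has no direct analogue here, since there is no underlying smooth function behind $v_h\in V_h$.)

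The genuine gap is the one you flag in your last paragraph, and your proposed resolution does not close it. Your third regime needs $\int_F[\partial_i\partial_j v_h]\,\mathrm{d}S=0$ on each interior face in order to apply the Poincar\'e inequality on $F$, but no such face-wise identity for the tangential--normal jump exists in the paper: Lemma \ref{lm:err-tn} is a global bound on $\sum_T\int_{\partial T}\phi\,\partial_{ij}v_h\,\nu_i$ against an $H^1$ test function, not a vanishing-mean statement. Worse, the proof of Lemma \ref{lm:err-tn} itself invokes $\Pi_h^c$ and the estimate \eqref{err-conforming-relative-H3}, so letting the present lemma ``rest on'' the exchange-of-sub-rectangles machinery would be circular. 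In addition, for the Morley-type family the pure second derivatives $\partial_j^2 v_h(a_i)$ are not vertex DoFs either (only the face-barycenter values of $\partial_\nu^2$ are), so they fall into none of your three regimes. The missing cases can in fact be handled purely locally, using the decompositions of Lemmas \ref{lm:Morley-tn-local} and \ref{lm:Adini-tn-local}: for the Adini-type element, $\Pi_h^{\bm{e}_k}\partial_k v_h$ is a well-defined globally continuous piecewise polynomial (its DoFs $\partial_k v_h(a_i)$, $\partial_k^2 v_h(a_i)$ are single-valued), so across a face normal to $x_k$ the jump of $\partial_k v_h$ equals the jump of $w:=\partial_k v_h-\Pi_T^{\bm{e}_k}\partial_k v_h\in W_k(T)$; since $\Pi_T^{\bm{e}_k}$ preserves $\mathcal{P}_1$, Bramble--Hilbert gives $\|w\|_{0,T}\lesssim h^2|v_h|_{3,T}$, and an inverse estimate yields $|\partial_m w(a_i)|\lesssim h^{-1-n/2}\|w\|_{0,T}\lesssim h^{1-n/2}|v_h|_{3,T}$, which is exactly the required bound for $|\alpha|=2$; faces normal to a direction other than $i$ and $j$ are trivial by the $C^0$-continuity of $V_h$. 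The Morley-type case reduces to the same argument after subtracting $\Pi_h^{\bm{1}}v_h$ via \eqref{eq:Morley-Adini} and replacing $W_k(T)$, $\Pi_h^{\bm{e}_k}$ by $G_k(T)$, $\Pi_h^{\bm{0}}$. Without some such replacement for your third step, the proof is incomplete.
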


\textit{Sketch of Proof.} Note that for any $v_h \in V_h$, it holds that $v_h|_T \in Q_5(T)$.
Taking $k=2$ in~\eqref{eq:high-order-dof} and $V_h^c = V_{h0}^2$, the interpolation operator $\Pi_h^c$ is then defined as follows. For $a_i$ being an interior vertex node of $\mathcal{T}_h$ and $d_T \in \mathcal{N}_T^2$ being any one of the degree of freedoms, let
\begin{equation}
d_T(\Pi_h^c v_h)(a_i) = \dfrac{1}{N_h(a_i)} \sum_{T^{\prime} \in \mathcal{T}_h(a_i)} d_{T^\prime} (v_h^{T^\prime}) (a_i).
\end{equation}
Here, $d_{T^\prime}$ should be of the same type as $d_T$ and $T^\prime$ shares the same vertex node $a_i$ with $T$.  For $a_i \in \partial \Omega$ being a boundary vertex, we then define $d_T(\Pi_h^c v_h)(a_i) = 0$.  The rest of the estimation is highly similar to the proof of Theorem~\ref{thm:approx-of-Vh-Vh0} and we ommit here for brevity. $\blacksquare$

\begin{lemma}[Approximation property of $H^4$ conforming relative]\label{lemma: conforming-relative-H4}
Let $s\in [0,1]$ and $u\in H^{3+s}(\Omega)\cap H^{3}_0(\Omega)$, there exists an $n$-rectangle finite element space $\tilde{V}_h \subset H^4(\Omega) \cap H_0^3(\Omega)$ and an interpolation operator $\tilde{\Pi}_h: H^{3+s}(\Omega)\cap H^{3}_0(\Omega)\rightarrow \tilde{V}_h $ such that
\begin{equation}\label{err-conforming-relative-H4}
\sum_{m=0}^{3} h^{m-3-s}\vert u - \tilde{\Pi}_h u \vert_{m,h} + \vert \tilde{\Pi}_h u \vert_{3+s,\Omega} \lesssim \vert u \vert_{3+s,\Omega}
\end{equation}
\end{lemma}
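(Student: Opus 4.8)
The plan is to realize $\tilde V_h$ as the homogeneous-boundary part of the $H^4$-conforming relative obtained by taking $k=3$ in~\eqref{eq:high-order-dof}: on each $T\in\mathcal T_h$ the shape functions are $Q_7(T)$, the DoFs are the values $\partial^\alpha v(a_i)$ with $0\le\alpha_j\le 3$, and the tensor-product/Hermite argument recalled just after~\eqref{eq:high-order-dof} gives $V_h^3\subset H^4(\Omega)$. To land in $H^3_0(\Omega)$ one prescribes to be zero, at every boundary vertex $a_i$, exactly those DoFs $\partial^\alpha v(a_i)$ for which $\alpha_k\le 2$ for some direction $k$ such that $a_i$ lies on a boundary face of outward normal $\pm e_k$; these are precisely the DoFs that determine the traces of $v$, $\nabla v$ and $\nabla^2 v$ on the boundary faces, so the resulting space $\tilde V_h:=V_{h0}^3$ satisfies $\tilde V_h\subset H^4(\Omega)\cap H^3_0(\Omega)$. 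The operator $\tilde\Pi_h$ is then defined by the same projection-averaging recipe as in Theorem~\ref{thm:approx-of-Vh-Vh0}: let $w_h|_T$ be the $L^2(T)$-projection of $u$ onto $Q_7(T)$, and let $\tilde\Pi_h u\in\tilde V_h$ be obtained by averaging, over the vertex patch $\mathcal T_h(a_i)$, each free DoF value of $w_h$ and setting the constrained boundary DoFs to zero. This route avoids any real-interpolation-space machinery.

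Two estimates then have to be assembled. First, since $\mathcal P_3(T)\subset Q_7(T)$, standard $L^2$-projection theory gives $|u-w_h|_{m,T}\lesssim h^{3+s-m}|u|_{3+s,T}$ for $0\le m\le 3$, together with the fractional-order bound $|u-w_h|_{3+s,T}\lesssim |u|_{3+s,T}$ (Bramble--Hilbert in fractional Sobolev spaces plus an inverse estimate), whence $|u-w_h|_{3+s,h}\lesssim|u|_{3+s,\Omega}$. Setting $\phi_h:=w_h-\tilde\Pi_h u$, which is piecewise in $Q_7(T)$, the vertex-patch argument of Theorem~\ref{thm:approx-of-Vh-Vh0} — chaining jumps of $w_h$ across interior faces inside $\mathcal T_h(a_i)$ and using the local trace theorem — yields, for a DoF of order $|\alpha|$, the bound $|\partial^\alpha\phi_h^T(a_i)|^2\lesssim h^{6+2s-n-2|\alpha|}\sum_{T'\in\mathcal T_h(a_i)}|u|_{3+s,T'}^2$; inserting this into the scaling inequality that generalizes~\eqref{scaling-argument} (now with pure and mixed derivatives up to order $3$) and summing over $T$ gives $h^{2m}|\phi_h|_{m,h}^2\lesssim h^{6+2s}|u|_{3+s,\Omega}^2$ for $0\le m\le 3$, hence $\sum_{m=0}^3 h^{m-3-s}|u-\tilde\Pi_h u|_{m,h}\lesssim|u|_{3+s,\Omega}$ after a triangle inequality. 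Second, since $\tilde\Pi_h u\in H^4(\Omega)\subset H^{3+s}(\Omega)$ one has $|\tilde\Pi_h u|_{3+s,\Omega}\le|u-\tilde\Pi_h u|_{3+s,h}+|u|_{3+s,\Omega}$ and may split $u-\tilde\Pi_h u=(u-w_h)-\phi_h$ elementwise: the first piece is controlled by the fractional projection bound, and for the second the elementwise inverse estimate $|\phi_h|_{3+s,T}\lesssim h^{-s}|\phi_h|_{3,T}$ combined with the case $m=3$ above gives $|\phi_h|_{3+s,h}\lesssim h^{-s}\cdot h^{s}|u|_{3+s,\Omega}=|u|_{3+s,\Omega}$. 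Together these close~\eqref{err-conforming-relative-H4}.

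The genuine work is in the boundary terms of the vertex-patch estimate. At a boundary vertex $a_i$ the constrained DoFs are set to $0$ rather than averaged, so one must show that the corresponding values $\partial^\alpha w_h^T(a_i)$ are themselves small; this rests on the fact that $u\in H^3_0(\Omega)$ forces $\partial^\alpha u$ to vanish on the boundary faces through $T$ exactly for the multi-indices with $\alpha_k\le 2$ in the relevant normal direction, and — since high-order pointwise derivative values of the merely $H^{3+s}$ function $u$ need not exist — the smallness of $\partial^\alpha w_h^T(a_i)$ has to be extracted through face trace inequalities applied to $w_h^T-u$ on the boundary face, just as in the boundary estimate within the proof of Theorem~\ref{thm:approx-of-Vh-Vh0}, while keeping careful track of which DoFs the $H^3_0$ condition actually constrains. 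A subsidiary point to be settled first is that zeroing precisely these boundary DoFs does produce a subspace of $H^3_0(\Omega)$ and leaves the patchwise DoF count equal to $\dim Q_7$, so that $\tilde\Pi_h$ is well defined; everything else is routine adaptation of the arguments already recorded for Theorem~\ref{thm:approx-of-Vh-Vh0}.
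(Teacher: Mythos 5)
Your construction of $\tilde V_h$ and $\tilde\Pi_h$ coincides with the paper's: take $k=3$ in \eqref{eq:high-order-dof} so that the shape space is $Q_7(T)$ and $V_h^3\subset H^4(\Omega)$, project $u$ elementwise onto $Q_7(T)$, average the free DoFs over vertex patches, and zero exactly those boundary DoFs $\partial^\alpha v(a_i)$ that vanish for every $v\in H_0^3(\Omega)\cap C^\infty(\Omega)$ (your characterization of these via $\alpha_k\le 2$ in a boundary-normal direction is the correct unwinding of the paper's condition). The integer-order estimates $h^{m-3-s}|u-\tilde\Pi_h u|_{m,h}\lesssim|u|_{3+s,\Omega}$, $0\le m\le 3$, including the boundary-vertex terms handled through face trace inequalities applied to $w_h^T-u$, are a routine adaptation of Theorem~\ref{thm:approx-of-Vh-Vh0}, exactly as the paper asserts.

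The gap is in the stability term $|\tilde\Pi_h u|_{3+s,\Omega}$, which is a \emph{global} Sobolev--Slobodeckij seminorm, not a broken one. Your inequality $|\tilde\Pi_h u|_{3+s,\Omega}\le|u-\tilde\Pi_h u|_{3+s,h}+|u|_{3+s,\Omega}$ is not valid for $s\in(0,1)$: the correct triangle inequality requires the global seminorm $|u-\tilde\Pi_h u|_{3+s,\Omega}$ on the right, and the global seminorm is \emph{not} controlled by the elementwise sum, since it contains the cross-element double integrals $\int_T\int_{T'}$ that the broken seminorm omits. Worse, your decomposition $u-\tilde\Pi_h u=(u-w_h)-\phi_h$ splits the error into pieces that are discontinuous across faces, so their global seminorms of order $3+s$ are not even finite, and no elementwise inverse estimate $|\phi_h|_{3+s,T}\lesssim h^{-s}|\phi_h|_{3,T}$ can recover the missing cross terms. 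This is precisely why the paper does \emph{not} avoid interpolation-space machinery: it proves the two endpoint bounds $|u-\tilde\Pi_h u|_{3,\Omega}\lesssim|u|_{3,\Omega}$ for $u\in H_0^3(\Omega)$ and $|u-\tilde\Pi_h u|_{4,\Omega}\lesssim|u|_{4,\Omega}$ for $u\in H^4(\Omega)\cap H_0^3(\Omega)$ (both are genuinely global since $\tilde\Pi_h u$ lies in $H^4(\Omega)$ and the orders are integers), and then invokes real interpolation of the linear operator $I-\tilde\Pi_h$ between $H^3$ and $H^4$ to obtain boundedness on $H^{3+s}=[H^3,H^4]_{s,2}$. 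You should replace your fractional argument by this endpoint-plus-interpolation step; the rest of your proof stands.
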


\textit{Sketch of Proof.} 
Firstly we consider taking $k=3$ in \eqref{eq:high-order-dof} to obtain a finite element space $V_{h}^3 \subset H^4(\Omega)$ and the set of DoFs $\mathcal{N}_T^3$. In order to maintain the boundary conditions of $H_0^3(\Omega)$, some necessary corrections should be made such that $\tilde{V}_h \subset V_{h}^3 \cap H_0^3(\Omega)$. For $u\in H^{3+s}(\Omega) \cap H_0^3(\Omega)$, define $w_h \in L^2(\Omega)$ such that
\begin{equation}
w_h|_{T}  := w_h^{T} \in Q_7(T) \text{  and } \int_{T} w_h q \,\mathrm{d}x = \int_{T} u q\,\mathrm{d}x, ~ \forall q \in Q_7(T), ~ T\in \mathcal{T}_h.
\end{equation}
Then the interpolation $\tilde{\Pi}_h u$ is given by using $\mathcal{N}_T^3$ and evaluated as
\begin{equation}\label{eq:correction-of-H4-dofs}
d_T(\tilde{\Pi}_{h} u)(a_i) = 
\left\{
\begin{aligned}
&0 ,& ~&\text{if } d_T(v)(a_i) = 0,~ \forall v\in H_0^3(\Omega) \cap C^{\infty}(\Omega),\\
&\dfrac{1}{N_h(a_i)} \sum_{T^{\prime} \in \mathcal{T}_h(a_i)} d_{T^\prime} (w_h^{T^\prime}) (a_i),& ~&\text{otherwise.}
\end{aligned}
\right.
\end{equation}
We note here the first condition of \eqref{eq:correction-of-H4-dofs} only guarantees part of the DoFs to be zero on boundary vertices. Again, we refer to the proof of Theorem~\ref{thm:approx-of-Vh-Vh0} for the rest of the estimation, following which we also have
\begin{align*}
&\vert u - \tilde{\Pi}_h u \vert_{3,\Omega} \lesssim \vert u \vert_{3,\Omega},  \text{ for } u \in H_0^3(\Omega), \\
& \vert u - \tilde{\Pi}_h u \vert_{4,\Omega} \lesssim \vert u \vert_{4,\Omega},  \text{ for } u \in H^4(\Omega) \cap H_0^3(\Omega).
\end{align*}
This gives the stability result $\vert u - \tilde{\Pi}_h u \vert_{3+s,\Omega} \lesssim \vert u \vert_{3+s,\Omega} $ for any $s\in[0,1]$ by applying the interpolation theory of the Sobolev spaces. 
$\blacksquare$

\section{Estimate of tangential-normal terms by $n$-rectangle interpolation} \label{sc:tn}
From the convergence framework of nonconforming methods \cite{wang2001on}, the weak continuities are crucial in the analysis. In terms of the $H^3$ problems, one needs to take care of all the second-order derivatives, which consist of the tangential-tangential, normal-normal, and tangential-normal components. For the Morley-type element, the tangential-tangential and normal-normal continuities are weak, see Lemmas \ref{lm:Morley-tt} and \ref{lm:Morley-nn}, respectively. Thanks to the $C^0$-continuity of Adini-type finite element space and Lemma \ref{lm:Adini-nn}, the tangential-tangential and normal-normal components are strongly continuous. 

The rest of the second-order terms, i.e. the tangential-normal terms, can not be tackled via the DoFs. As a special property of the $n$-rectangle element, the interpolation is a crucial tool in the convergence analysis. 

\subsection{Some properties by local interpolation}

We derive several local interpolation properties. Let us denote the $(n-2)$-dimensional sub-rectangles of $T$ as:
\begin{equation} \label{eq:edge}
\ell_{T,i,j}^{\pm,\pm} = \lbrace x\in \bar{T} ~\vert ~ \xi_i = \pm 1, \xi_j = \pm 1 \rbrace 
\quad \text{for } j\neq i.
\end{equation}

\begin{lemma}[Properties of Morley-type element by local interpolation] \label{lm:Morley-tn-local}
Let $v \in \mathcal{P}_M(T) $. For $j \neq i$, it holds that 
\begin{equation} \label{eq:Morley-tn-local}
\int_{F_j^\pm} \dfrac{\partial }{\partial x_i} \left( \dfrac{\partial (\Pi_T^{\bm{1}} v) }{\partial x_i} - \Pi_T^{\bm{0}} \dfrac{\partial (\Pi_T^{\bm{1}} v) }{\partial x_i} \right) \,\mathrm{d} S = 0,
\end{equation}
where $\Pi_T^{\bm{0}}$ and $\Pi_T^{\bm{1}}$ are the local interpolations of $Q_1$ and Adini elements, respectively (see \eqref{eq:global-Q1-Pi}).
\end{lemma}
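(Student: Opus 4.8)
The plan is to pass to reference coordinates and reduce the identity to a one–variable cancellation. After the affine change of variables $x_i = c_i + h_i\xi_i$, the element becomes $T=[-1,1]^n$, the face $F_j^\pm$ becomes $\{\xi_j=\pm1\}$, and $\partial_{x_i}$ equals $h_i^{-1}\partial_{\xi_i}$; since the positive constants $h_1,\dots,h_n$ only rescale the left-hand side of \eqref{eq:Morley-tn-local}, it suffices to prove the identity in these reference coordinates. The only structural input is that $w:=\Pi_T^{\bm1}v$ lies in the Adini shape space, which in $\xi$-coordinates equals $Q_1(T)\cdot\operatorname{span}\{1,\xi_k^2 : 1\le k\le n\}= Q_1(T)+\sum_{k=1}^n\xi_k^2\,Q_1(T)$; hence we may write $w=p_0+\sum_{k=1}^n\xi_k^2 p_k$ with $p_0,p_k\in Q_1(T)$. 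Nothing else about $\mathcal P_M(T)$ is used.

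Next I would compute $g:=\partial_{\xi_i}w=\partial_{\xi_i}p_0+\sum_{k\ne i}\xi_k^2\,\partial_{\xi_i}p_k+2\xi_i p_i+\xi_i^2\,\partial_{\xi_i}p_i$ and examine how $\mathrm{Id}-\Pi_T^{\bm0}$ acts on it. Because $\Pi_T^{\bm0}$ is the tensor product of the one-dimensional linear interpolants at $\pm1$, on a monomial $\xi^\alpha$ it replaces each $\xi_m^{\alpha_m}$ by $1$ if $\alpha_m$ is even and by $\xi_m$ if $\alpha_m$ is odd, so $(\mathrm{Id}-\Pi_T^{\bm0})\xi^\alpha\ne0$ forces some $\alpha_m\ge2$. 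Among the monomials of $g$, the only ones with an exponent $\ge2$ are: (i) those in $\sum_{k\ne i}\xi_k^2\,\partial_{\xi_i}p_k$, which are of the form $\xi_k^2(\cdot)$ or $\xi_k^3(\cdot)$ and, since $\partial_{\xi_i}p_k\in Q_1^{\hat i}(T)$, contain no $\xi_i$; and (ii) the $\xi_i^2$-term, which — writing $p_i=a+\xi_i b$ with $a,b\in Q_1^{\hat i}(T)$ and noting $b=\partial_{\xi_i}p_i$ — equals $(2b+b)\xi_i^2=3\xi_i^2 b$, with $b$ independent of $\xi_i$. The remaining monomials of $g$ (namely $\partial_{\xi_i}p_0$ and $2\xi_i a$) are multilinear and are annihilated by $\mathrm{Id}-\Pi_T^{\bm0}$. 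Consequently $(\mathrm{Id}-\Pi_T^{\bm0})g=3(\xi_i^2-1)b+R$ where $R$ does not depend on $\xi_i$, so $\partial_{\xi_i}\big((\mathrm{Id}-\Pi_T^{\bm0})g\big)=6\xi_i b$.

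Finally I integrate over $F_j^\pm$. Since $i\ne j$, the coordinate $\xi_i$ is one of the variables parametrizing $F_j^\pm$, and $b\in Q_1^{\hat i}(T)$ has no $\xi_i$-dependence; hence $\int_{F_j^\pm}6\xi_i b\,\mathrm dS$ factors into $\big(\int_{-1}^{1}\xi_i\,\mathrm d\xi_i\big)$ times the integral of $b$ over the remaining $n-2$ tangential variables, which vanishes because $\int_{-1}^{1}\xi_i\,\mathrm d\xi_i=0$. Restoring the $h$-factors multiplies the result by a nonzero constant, yielding \eqref{eq:Morley-tn-local}.

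I expect the only point requiring care to be the bookkeeping in the middle step, namely verifying that every monomial of $g=\partial_{\xi_i}w$ on which $\mathrm{Id}-\Pi_T^{\bm0}$ acts nontrivially is either free of $\xi_i$ or is exactly $3\xi_i^2\,\partial_{\xi_i}p_i$; once this is in place the cancellation is forced purely by the oddness of $\xi_i\mapsto\xi_i$ on $[-1,1]$. Alternatively, one may bypass the monomial analysis by noting that $\partial_{\xi_i}^2 w\in Q_1(T)$, hence $\partial_{\xi_i}^2 w=\Pi_T^{\bm0}\partial_{\xi_i}^2 w$, computing $\partial_{\xi_i}(\Pi_T^{\bm0}\partial_{\xi_i}w)=2a$, and reading off $\partial_{\xi_i}^2 w-\partial_{\xi_i}(\Pi_T^{\bm0}\partial_{\xi_i}w)=6\xi_i b$. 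I do not anticipate any genuine obstacle.
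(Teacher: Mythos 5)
Your proof is correct and follows essentially the same route as the paper: reduce to reference coordinates, isolate the $\xi_i$-dependent part of $(\mathrm{Id}-\Pi_T^{\bm 0})\,\partial_{x_i}(\Pi_T^{\bm 1}v)$ as a multiple of $(\xi_i^2-1)$ times a function in $Q_1^{\hat i}(T)$, and kill it by integrating in the $\xi_i$ direction over $F_j^\pm$ (your odd-function argument is the same cancellation as the paper's fundamental-theorem-of-calculus step). If anything, your monomial bookkeeping is slightly more careful than the paper's, which asserts $\partial_{x_i}(\Pi_T^{\bm 1}v)\in Q_1(T)+Q_1^{\hat i}(T)\cdot\operatorname{span}\{\xi_i^2-1\}$ and thereby glosses over the $\xi_i$-independent terms $\xi_k^2\,\partial_{\xi_i}p_k$ ($k\neq i$) that you correctly collect into the harmless remainder $R$.
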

\begin{proof} We have $\Pi_T^{\bm{1}} v \in Q_1(T) \cdot \operatorname{span}\{1, x_k^2~|~ 1 \leq k \leq n\}$, and hence 
$$ 
\dfrac{\partial (\Pi_T^{\bm{1}} v) }{\partial x_i} \in Q_1(T) + Q_1^{\hat{i}}(T) \cdot \operatorname{span}\{ \xi_i^2 - 1\} := Q_1(T) + G_i(T).
$$ 
Next, we observe that both $\frac{\partial (\Pi_T^{\bm{1}} v) }{\partial x_i} - \Pi_T^{\bm{0}} \frac{\partial (\Pi_T^{\bm{1}} v) }{\partial x_i}$ and $G_i(T)$ vanish at the vertices of $T$, whence
$$ 
\dfrac{\partial (\Pi_T^{\bm{1}} v) }{\partial x_i} - \Pi_T^{\bm{0}} \dfrac{\partial (\Pi_T^{\bm{1}} v) }{\partial x_i} \in G_i(T).
$$ 
Notice that $G_i(T)$ vanishes on $(n-2)$-dimensional sub-rectangles
  $\ell_{T,i,j}^{\pm,\pm}$ due to the factor $(\xi_i^2 - 1)$. Then,
  the desired result \eqref{eq:Morley-tn-local} can be obtained by
  integrating along the $x_i$ direction. $\blacksquare$
\end{proof}

\begin{lemma}[Properties of Adini-type element by local interpolation] \label{lm:Adini-tn-local}
Let $v \in \mathcal{P}_A(T) $. For $j \neq i$, it holds that 
\begin{equation} \label{eq:Adini-tn-local}
\int_{F_j^\pm} \dfrac{\partial }{\partial x_i} \left( \dfrac{\partial v }{\partial x_i} - \Pi_T^{\bm{e}_i} \dfrac{\partial v}{\partial x_i} \right)\,\mathrm{d}S = 0,
\end{equation}
where $\Pi_T^{\bm{e}_i}$ are the local interpolation of the partial Adini element (see \eqref{eq:global-Q1-Pi}). 
\end{lemma}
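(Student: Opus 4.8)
\emph{Proof proposal.} The plan is to argue in the spirit of the proof of Lemma~\ref{lm:Morley-tn-local}: reduce the face integral over $F_j^\pm$ to a comparison of the restrictions of a single error polynomial to the two opposite faces $F_i^+$ and $F_i^-$, and then integrate along the $x_i$-direction. Write $w := \partial v/\partial x_i$ and $\psi := w - \Pi_T^{\bm e_i} w$; since the vertex values belong to the DoFs of the partial Adini element, $\psi$ is a polynomial that vanishes at all $2^n$ vertices of $T$. The first — and main — step is to show that $\psi|_{\xi_i=1}$ and $\psi|_{\xi_i=-1}$, viewed as polynomials in $\xi' := (\xi_k)_{k\neq i}$, coincide. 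For any polynomial $f$, splitting $f$ into its parts of even and odd degree in $\xi_i$ shows that $f|_{\xi_i=1}-f|_{\xi_i=-1}$ equals twice the odd-degree part evaluated at $\xi_i=1$. Writing $v=\sum_d g_d(\xi')\,\xi_i^d$, the coefficients $g_2,g_3,g_4,g_5$ all lie in $Q_1^{\hat i}(T)$: the powers $\xi_i^2,\xi_i^3$ of $v$ can only come from the summand $Q_1(T)\cdot x_i^2$ of $\mathcal{P}_A(T)$ and the powers $\xi_i^4,\xi_i^5$ only from $Q_1(T)\cdot x_i^4$, while the summands $Q_1(T)\cdot x_k^2$ and $Q_1(T)\cdot x_k^4$ with $k\neq i$ contribute only to $g_0,g_1$. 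Hence the odd-degree part of $w=\partial_{x_i}v$ is a combination of $g_2\xi_i$ and $g_4\xi_i^3$, so $w|_{\xi_i=1}-w|_{\xi_i=-1}\in Q_1^{\hat i}(T)$; and since $\Pi_T^{\bm e_i}w\in Q_1(T)\cdot\operatorname{span}\{1,x_i^2\}=Q_1^{\hat i}(T)\cdot\operatorname{span}\{1,\xi_i,\xi_i^2,\xi_i^3\}$, its $\xi_i^1$- and $\xi_i^3$-coefficients also lie in $Q_1^{\hat i}(T)$, and the same conclusion holds for its restriction difference. Subtracting, $\psi|_{\xi_i=1}-\psi|_{\xi_i=-1}\in Q_1^{\hat i}(T)$.

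Next, each vertex of the $(n-1)$-rectangle $F_i^{\pm}$, paired with $\xi_i=\pm 1$, is a vertex of $T$, so $\psi|_{\xi_i=1}$ and $\psi|_{\xi_i=-1}$ both vanish at all $2^{n-1}$ vertices of $F_i^{\pm}$; a multilinear function of the $n-1$ remaining variables vanishing at all $2^{n-1}$ such points is identically zero, hence $\psi|_{\xi_i=1}=\psi|_{\xi_i=-1}$. Finally, for $j\neq i$, parametrizing $F_j^{\pm}$ by $(\xi_k)_{k\neq j}$ and writing $\partial/\partial x_i=h_i^{-1}\partial/\partial\xi_i$, I carry out the $\xi_i$-integration first and invoke the fundamental theorem of calculus:
\begin{equation*}
\int_{F_j^{\pm}}\frac{\partial\psi}{\partial x_i}\,\mathrm{d}S = C\int\left(\psi\big|_{\xi_i=1,\,\xi_j=\pm1}-\psi\big|_{\xi_i=-1,\,\xi_j=\pm1}\right)\prod_{k\neq i,j}\mathrm{d}\xi_k = 0,
\end{equation*}
which is precisely \eqref{eq:Adini-tn-local}; equivalently, the integrand is a multiple of the evaluation of $\psi$ on $\ell_{T,i,j}^{+,\pm}$ minus that on $\ell_{T,i,j}^{-,\pm}$, which agree by the previous step.

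The main obstacle is the coefficient bookkeeping in the first step: one must check precisely which monomials of $\mathcal{P}_A(T)$, after one differentiation in $x_i$ and restriction to $\xi_i=\pm1$, still carry dependence of degree higher than one on some $\xi_k$, and confirm that all such contributions cancel in $\psi|_{\xi_i=1}-\psi|_{\xi_i=-1}$. This is exactly where the Adini-type element differs from the Morley-type case of Lemma~\ref{lm:Morley-tn-local}: the additional $x_k^4$-enrichment (and the $\xi_i^3$-terms produced by the $x_i^4$-enrichment) forces the careful tracking of $g_2,\dots,g_5$ above, whereas everything else is routine polynomial algebra on the reference rectangle.
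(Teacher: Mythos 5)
Your proof is correct, and it reaches \eqref{eq:Adini-tn-local} by a noticeably different middle argument than the paper's, even though both proofs end identically: parametrize $F_j^\pm$ by $(\xi_k)_{k\neq j}$, pull out $\partial/\partial x_i=h_i^{-1}\partial/\partial\xi_i$, integrate in $\xi_i$ first, and reduce everything to the claim that the error $\psi=\partial_{x_i}v-\Pi_T^{\bm{e}_i}\partial_{x_i}v$ has equal traces on $\xi_i=1$ and $\xi_i=-1$. The paper establishes that claim structurally: it decomposes $\partial v/\partial x_i$ into an element of the partial Adini shape space $Q_1(T)\cdot\operatorname{span}\{1,\xi_i^2\}$ plus an element of $W_i(T):=Q_1^{\hat{i}}(T)\cdot\operatorname{span}\{(\xi_k^2-1),(\xi_t^2-1)^2\}$, notes that every element of $W_i(T)$ has vanishing partial-Adini DoFs, concludes $\psi\in W_i(T)$, and observes that the only $\xi_i$-dependent generator $(\xi_i^2-1)^2$ vanishes identically on $\xi_i=\pm 1$. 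You avoid identifying the error space altogether: you isolate the odd-in-$\xi_i$ part of $\psi$, check that its coefficients lie in $Q_1^{\hat{i}}(T)$, and then kill the resulting multilinear trace difference using its vanishing at the $2^{n-1}$ vertices of $F_i^\pm$ together with $Q_1$-unisolvence. One small imprecision in your bookkeeping: when $c_i\neq 0$ the summand $Q_1(T)\cdot x_i^4$ also contributes to $g_2$ and $g_3$, so "can only come from $Q_1(T)\cdot x_i^2$" is literally true only in reference coordinates $\xi$; this is harmless since those extra contributions still lie in $Q_1^{\hat{i}}(T)$, which is all you use. On balance, your route trades the paper's explicit decomposition (which yields the stronger, reusable fact $\psi\in W_i(T)$) for an argument that touches the DoFs only through vertex values and is less sensitive to the exact form of the enrichment.
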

\begin{proof}
For any $v \in \mathcal{P}_A(T) = Q_1(T) \cdot \operatorname{span}\{1, x_k^2, x_k^4 ~|~ 1 \leq k \leq n\}$, we have 
$$
\begin{aligned}
\frac{\partial v}{\partial x_i} & \in  Q_1^{\hat{i}}(T) \cdot \operatorname{span} \{1, x_k^2, x_k^4~|~ 1 \leq k \leq n\} + Q_1(T) \cdot \operatorname{span}\{x_i, x_i^3\}  \\
& = Q_1(T) \cdot \mathrm{span}\{1, \xi_i^2\} + Q_1^{\hat{i}}(T) \cdot \mathrm{span}\{(\xi_k^2 -1), (\xi_t^2 -1)^2 ~|~ k\neq i, 1\leq t \leq n\} \\
& := Q_1(T) \cdot \mathrm{span}\{1, \xi_i^2\}  + W_i(T).
\end{aligned}
$$
Next, we see that for any $w \in W_i(T)$, $w$ and $\frac{\partial w}{\partial x_i}$ vanish at the vertices of $T$, which exactly correspond to the DoFs of $n$-rectangle partial Adini element. Therefore, 
$$
\dfrac{\partial v }{\partial x_i} - \Pi_T^{\bm{e}_i} \dfrac{\partial v}{\partial x_i}  \in W_i(T).
$$
Now, let $\alpha_k, \beta_t \in \mathbb{R}$ and $q_k, r_t \in Q_1^{\hat{i}}(T)$ such that
$$
\frac{\partial v }{\partial x_i} - \Pi_T^{\bm{e}_i} \frac{\partial v}{\partial x_i} = \sum_{k \neq i} \alpha_k q_k (\xi_k^2-1) + \sum_{t = 1}^n \beta_t r_t (\xi_k^2 - 1)^2.
$$
Then, we obtain 
$$ 
\int_{F_j^\pm} \dfrac{\partial }{\partial x_i} \left( \dfrac{\partial v }{\partial x_i} - \Pi_T^{\bm{e}_i} \dfrac{\partial v}{\partial x_i} \right)\,\mathrm{d}S 
= \int_{F_j^\pm}  \dfrac{\partial }{\partial x_i} \left(\beta_i r_i (\xi_i^2 - 1)^2 \right) \,\mathrm{d}S = 0.
$$ 
This completes the proof. $\blacksquare$
\end{proof}

\subsection{Estimate of tangential-normal terms: Exchange of sub-rectangles}

We use a new technique called {\it exchange of sub-rectangles} to estimate the tangential-normal terms. 

\begin{lemma}[Estimate of tangential-norm terms]\label{lm:err-tn}
Let $\phi \in H^1(\Omega)$ be a piecewise polynomial defined on $\mathcal{T}_h$, $V_{h0}$ be the finite element space of the $n$-rectangle Morley-type element or the $n$-rectangle Adini-type element. For $j \neq i$, it holds that 
\begin{equation}\label{eq:err-tn}
\Big\vert \sum_{T\in\mathcal{T}_h}\int_{\partial T} \phi \dfrac{\partial^2 v_h}{\partial x_i \partial x_j} \nu_i \,\mathrm{d}S \Big\vert \leq C h \vert \phi \vert_{1,\Omega} |v_h |_{3,h}.
\end{equation}
\end{lemma}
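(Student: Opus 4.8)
The plan is to discard the faces on which $\nu_i$ vanishes, rewrite the remaining boundary sum as jumps of the tangential--normal derivative $\partial_{ij}:=\partial^2/\partial x_i\partial x_j$, and then control those jumps by combining the local interpolation identities of Lemmas~\ref{lm:Morley-tn-local} and \ref{lm:Adini-tn-local} with the exchange of sub-rectangles. First, since $\nu_i=0$ on every face $F_k^\pm$ with $k\neq i$, only the faces of the form $F_i^\pm$ contribute, and because $\phi\in H^1(\Omega)$ has a single trace on each interior face,
\[
\sum_{T\in\mathcal T_h}\int_{\partial T}\phi\,\partial_{ij}v_h\,\nu_i\,\mathrm dS
=\sum_{F}\int_F\phi\,\llbracket\partial_{ij}v_h\rrbracket\,\mathrm dS+\sum_{F\subset\partial\Omega}(\pm)\int_F\phi\,\partial_{ij}v_h\,\mathrm dS,
\]
the first sum running over interior $(n-1)$-faces $F$ of the form $F_i^\pm$ and $\llbracket\cdot\rrbracket$ the jump oriented by the $x_i$-axis. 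On a boundary face $F$, $v_h\in V_{h0}$ gives $v_h|_F=0$, hence $\partial_j(v_h|_F)=0$; writing $\partial_{ij}v_h=\partial_j(\partial_i v_h)$ and integrating by parts along $F$ in the tangential variable $x_j$ leaves only integrals over the $(n-2)$-dimensional sub-rectangles of $F$ contained in $\partial\Omega$, which are disposed of by the remaining vanishing degrees of freedom of $V_{h0}$ and may be treated together with the interior terms below.

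The heart of the matter is the weak continuity $\int_F\llbracket\partial_{ij}v_h\rrbracket\,\mathrm dS=0$ for every interior face $F$ of the form $F_i^\pm$, which is not available element by element. Granting it, \eqref{eq:err-tn} follows quickly: let $\omega_F$ be the two elements sharing $F$ and $\bar\phi_F$ the well-defined mean of $\phi$ on $F$. Since $\phi$ and $v_h$ are piecewise polynomial, a Poincar\'e inequality on $F$ together with the trace and inverse inequalities gives $\|\phi-\bar\phi_F\|_{0,F}\lesssim h^{1/2}|\phi|_{1,\omega_F}$ and, using that the mean of the jump vanishes, $\|\llbracket\partial_{ij}v_h\rrbracket\|_{0,F}\lesssim h^{1/2}|v_h|_{3,\omega_F}$. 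Consequently
\[
\int_F\phi\,\llbracket\partial_{ij}v_h\rrbracket\,\mathrm dS=\int_F(\phi-\bar\phi_F)\,\llbracket\partial_{ij}v_h\rrbracket\,\mathrm dS\lesssim h\,|\phi|_{1,\omega_F}\,|v_h|_{3,\omega_F},
\]
and summing over $F$ with the finite overlap of the patches $\omega_F$ yields \eqref{eq:err-tn}.

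To establish the weak continuity I would first strip off the globally continuous part of $\partial_i v_h$. For the Adini-type element, $\partial_i v_h=\Pi_T^{\bm{e}_i}\partial_i v_h+G$ with $G$ in the space $W_i(T)$ from the proof of Lemma~\ref{lm:Adini-tn-local}, and $\Pi_h^{\bm{e}_i}(\partial_i v_h)$ is continuous because its degrees of freedom --- the vertex values of $\partial_i v_h$ and of $\partial_i^2 v_h$ --- are degrees of freedom of $V_h$. For the Morley-type element, \eqref{eq:Morley-Adini} together with the fact that each $r_k^\pm$ depends on a single variable gives $\partial_{ij}v_h=\partial_{ij}(\Pi_T^{\bm{1}}v_h)$ for $i\neq j$, and Lemma~\ref{lm:Morley-tn-local} supplies the analogous splitting $\partial_i(\Pi_T^{\bm{1}}v_h)=\Pi_T^{\bm{0}}\partial_i(\Pi_T^{\bm{1}}v_h)+G$. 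In either case a tangential derivative of a globally continuous function is single-valued across $F_i^\pm$, so $\llbracket\partial_{ij}v_h\rrbracket=\llbracket\partial_j G\rrbracket$ on $F$. Integrating by parts along $F$ in $x_j$ turns $\int_F\llbracket\partial_j G\rrbracket\,\mathrm dS$ into a signed combination of integrals of $\llbracket G\rrbracket$ over the $(n-2)$-dimensional sub-rectangles $\ell_{T,i,j}^{\pm,\pm}$ that bound $F$; these do not cancel element by element. Here the exchange of sub-rectangles is used: summing over all interior faces $F_i^\pm$ and regrouping the terms around each interior $(n-2)$-sub-rectangle $\ell$, the contributions of the finitely many elements containing $\ell$ and of the codimension-one faces through $\ell$ perpendicular to the $x_i$- and $x_j$-axes are re-paired --- ``exchanged'' --- so as to annihilate in pairs by the identities \eqref{eq:Morley-tn-local}/\eqref{eq:Adini-tn-local}, which state precisely that the integrals of $G$ over a matching pair of opposite sub-rectangles coincide. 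A surviving term, if any, is of strictly lower tangential--normal complexity and is removed by the tangential--tangential and normal--normal weak continuities already at hand (Lemmas~\ref{lm:Morley-tt}, \ref{lm:Morley-nn}, \ref{lm:Adini-nn}), or by induction on the number of tangential directions involved; on $\partial\Omega$ the corresponding pieces vanish from the definition of $V_{h0}$.

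The hard part is exactly this last step: checking that the re-indexing around each sub-rectangle genuinely pairs the pieces so that Lemmas~\ref{lm:Morley-tn-local}/\ref{lm:Adini-tn-local} apply --- simultaneously for both families and uniformly in the dimension $n$ --- while keeping the orientations and the local scalings controlled so that any residual still carries the two $h^{1/2}$ factors demanded by the second paragraph. The reduction to the faces $F_i^\pm$, the Poincar\'e/trace/inverse bounds, and the boundary bookkeeping are all routine by comparison.
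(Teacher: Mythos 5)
There is a genuine gap, and it sits exactly where you locate "the heart of the matter." Your argument is structured as: (a) prove the per-face weak continuity $\int_F \llbracket \partial^2 v_h/\partial x_i\partial x_j\rrbracket\,\mathrm{d}S=0$ on every interior face $F$ of type $F_i^\pm$, then (b) conclude by subtracting $\bar\phi_F$ and using Poincar\'e/trace/inverse bounds. Step (b) is fine \emph{conditional on} (a), but (a) is neither established nor, in all likelihood, true. The paper proves no such lemma — and if it held, the entire exchange-of-sub-rectangles apparatus would be superfluous, since the term would be handled exactly like the tangential-tangential and normal-normal terms in Lemmas \ref{lm:Morley-tt} and \ref{lm:Morley-nn}. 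Concretely, for the Adini-type element one has $\llbracket\partial_{ij}v_h\rrbracket|_F=\partial_j(G_T|_F)-\partial_j(G_{T'}|_F)$ with $G_T\in W_i(T)$, and $\int_{F_i^+}\partial_j(G_T)\,\mathrm{d}S$ reduces to a difference of integrals of $G_T$ over $\ell_{T,i,j}^{+,+}$ and $\ell_{T,i,j}^{+,-}$; the terms of $W_i(T)$ involving $(\xi_k^2-1)$ with $k\neq i,j$ multiplied by the $\xi_j$-linear part of $Q_1^{\hat i}$ do not cancel between these two sub-rectangles, and there is no mechanism forcing the result to agree with the neighbour's. Your own sketch for (a) is also internally inconsistent: a cancellation obtained only after summing over \emph{all} faces and regrouping around interior $(n-2)$-sub-rectangles cannot yield a statement about a \emph{single} face; moreover those sub-rectangle integrals carry the weight $\phi$, so the unweighted identities \eqref{eq:Morley-tn-local}/\eqref{eq:Adini-tn-local} cannot make them "annihilate in pairs" — quantifying the non-cancellation due to the variation of $\phi$ is precisely the content of the lemma, and you defer it.

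For comparison, the paper never localizes to single faces. After restricting to $F_{T,i}^\pm$ and integrating by parts in $x_j$, it inserts the globally continuous interpolant $\Pi_h^{\bm{e}_i}\partial_i v_h$ (resp.\ $\Pi_h^{\bm{0}}\partial_i\Pi_h^{\bm{1}}v_h$), and the "exchange" is a purely algebraic re-pairing of the four sub-rectangles $\ell_{T,i,j}^{\pm,\pm}$ \emph{within each element}, converting $\sum_T\int_{\partial F_{T,i}^+-\partial F_{T,i}^-}$ into $\sum_T\int_{\partial F_{T,j}^+-\partial F_{T,j}^-}$. One then integrates by parts back \emph{up} onto the faces $F_{T,j}^\pm$, where the integrand becomes $\partial_i(\partial_i v_h-\Pi_h^{\bm{e}_i}\partial_i v_h)$; only there are Lemmas \ref{lm:Morley-tn-local}/\ref{lm:Adini-tn-local} used, by subtracting the face mean $P_F^0\phi$ so that the unweighted vanishing integral absorbs the constant part of $\phi$ and the remainder gains a factor $h$. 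All the remaining face terms ($I_2$, $I_{11}$, $I_{12}$) are controlled through the $H^3$ conforming relative $\Pi_h^c$ of Lemma \ref{lemma: conforming-relative-H3}, which your proposal never invokes. A smaller issue: on boundary faces your reduction via $v_h|_F=0$ gives $\partial_j(v_h|_F)=0$, but the quantity you need is $\partial_j(\partial_i v_h|_F)$, the tangential derivative of the normal derivative, which does not vanish (and for the Morley-type element even $v_h|_F=0$ fails).
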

\begin{proof} For the sake of simplicity of the exposition, we first show \eqref{eq:err-tn} for the Adini-type element, then sketch the proof for the Morly-type element. 

\underline{Part I: proof for Adini-type element.} It is readily seen that $\nu_i |_{F_{T,i}^{\pm}} = \pm 1$ and vanishes on other $(n-1)$-dimensional faces of $T$. Then, using integration by parts on $F_{T,i}^\pm$, we have
\begin{equation}\label{eq:Adini-tn1}
\begin{aligned}
\sum_{T\in\mathcal{T}_h}\int_{\partial T} \phi \dfrac{\partial^2 v_h}{\partial x_i \partial x_j} \nu_i \,\mathrm{d}S &= \sum_{T\in\mathcal{T}_h}  \int_{F_{T,i}^{+}+F_{T,i}^{-}}\phi \dfrac{\partial^2 v_h}{\partial x_i \partial x_j}\nu_{i} \,\mathrm{d}S
= \sum_{T\in\mathcal{T}_h}  \int_{F_{T,i}^{+} - F_{T,i}^{-}}\phi \dfrac{\partial^2 v_h}{\partial x_i \partial x_j}\,\mathrm{d}S
 \\
& = \sum_{T\in\mathcal{T}_h} \int_{\partial F_{T,i}^{+} - \partial F_{T,i}^{-}} \phi \dfrac{\partial v_h}{\partial x_i} \nu_j \,\mathrm{d}\ell  - \sum_{T\in\mathcal{T}_h}
\int_{F_{T,i}^{+}-F_{T,i}^{-}} \dfrac{\partial \phi}{\partial x_j} \dfrac{\partial v_h}{\partial x_i} \,\mathrm{d}S := I_1 + I_2.
\end{aligned}
\end{equation}
Here, with a little bit abuse of notation, $\nu_j$ represents the $j$-th component of the unit outer vector which is normal to $\partial F_{T,i}^\pm$ and parallel to $F_{T,i}$.

\underline{Analysis of $I_2$}. Recall that $\Pi_h^{c}$ is the interpolation operator of the conforming relative defined in Lemma~\ref{lemma: conforming-relative-H3}. Notice that the inverse inequality can be applied on $\phi$ and that $\frac{\partial \phi}{\partial x_j}$ is actually continuous across the surfaces $F_{T,i}^{\pm}$ due to the $C^0$-continuity of $\phi$. Therefore, using the trace theorem, the estimate of $\Pi_h^c$ and the interpolation error \eqref{err-conforming-relative-H3} gives the estimate
\begin{equation} \label{eq:exchange-I2}
\begin{aligned}
\vert I_2 \vert &= \Big \vert \sum_{T\in\mathcal{T}_h}\int_{F_{T,i}^{+}-F_{T,i}^{-}} \dfrac{\partial \phi}{\partial x_j} \dfrac{\partial}{\partial x_i} \left(v_h-\Pi_h^c v_h\right) \,\mathrm{d}S \big \vert  \\ 
&\lesssim \sum_{T\in\mathcal{T}_h} \vert \phi \vert_{1,\partial T} \Big\Vert \dfrac{\partial}{\partial x_i} \left(v_h-\Pi_h^c v_h\right) \Big\Vert_{0,\partial T} 
\lesssim \sum_{T\in\mathcal{T}_h} h_T \vert \phi \vert_{1,T} \vert v_h \vert_{3,T} 
\lesssim h \vert \phi \vert_{1,\Omega} \vert v_h\vert_{3,h}.
\end{aligned}
\end{equation}

\underline{Analysis of $I_1$}.  Note that $\Pi_h^{\bm{e}_i} \frac{\partial v_h}{\partial x_i} \in H_0^1(\Omega)$. Hence, the following identity holds:
\begin{equation*}
I_1 = \sum_{T\in\mathcal{T}_h} \int_{\partial F_{T,i}^{+} - \partial F_{T,i}^{-}} \phi \left( \dfrac{\partial v_h}{\partial x_i}  - \Pi_{h}^{\bm{e}_i} \dfrac{\partial v_h}{\partial x_i}\right) \nu_j  \,\mathrm{d}\ell.
\end{equation*}

\begin{figure}[H]
\centering
\includegraphics[width=5.0in]{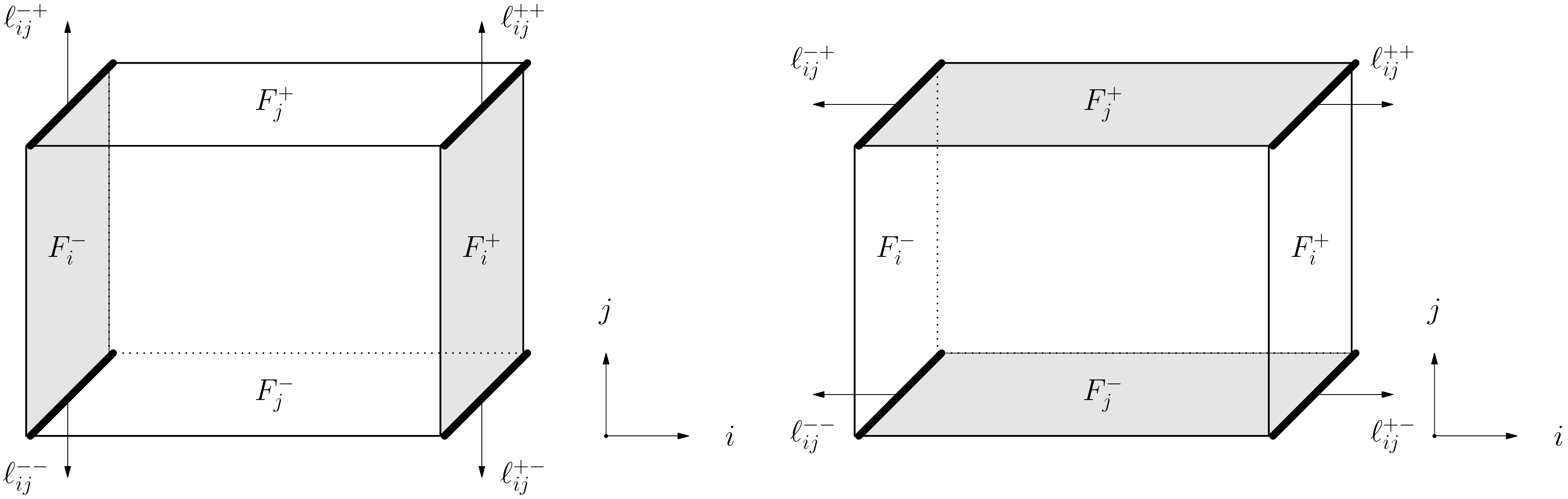} 
\caption{Exchange of sub-rectangles} \label{fig:exchange}.
\end{figure}

Rearranging the integrals over the edges and using the integration by parts, we find
\begin{align*}
I_1 &= \sum_{T\in\mathcal{T}_h} \left(\int_{\ell_{T,i,j}^{+,+} - \ell_{T,i,j}^{+,-}}\phi \left( \dfrac{\partial v_h}{\partial x_i}  - \Pi_{h}^{\bm{e}_i} \dfrac{\partial v_h}{\partial x_i}\right)  \,\mathrm{d}\ell - \int_{\ell_{T,i,j}^{-,+} - \ell_{T,i,j}^{-,-}} \phi \left( \dfrac{\partial v_h}{\partial x_i}  - \Pi_{h}^{\bm{e}_i} \dfrac{\partial v_h}{\partial x_i}\right)  \,\mathrm{d}\ell \right) \\ 
& =  \sum_{T\in\mathcal{T}_h} \left(\int_{\ell_{T,i,j}^{+,+} - \ell_{T,i,j}^{-,+}}\phi \left( \dfrac{\partial v_h}{\partial x_i}  - \Pi_{h}^{\bm{e}_i} \dfrac{\partial v_h}{\partial x_i}\right)  \,\mathrm{d}\ell - \int_{\ell_{T,i,j}^{+,-} - \ell_{T,i,j}^{-,-}} \phi \left( \dfrac{\partial v_h}{\partial x_i}  - \Pi_{h}^{\bm{e}_i} \dfrac{\partial v_h}{\partial x_i}\right)  \,\mathrm{d}\ell \right) \\
& = \sum_{T\in\mathcal{T}_h} \int_{\partial F_{T,j}^{+} - \partial F_{T,j}^{-}}\phi \left( \dfrac{\partial v_h}{\partial x_i}  - \Pi_{h}^{\bm{e}_i} \dfrac{\partial v_h}{\partial x_i}\right)\nu_i  \,\mathrm{d}\ell \\ 
& = \underbrace{\sum_{T\in\mathcal{T}_h} \int_{F_{T,j}^{+} - F_{T,j}^{-}} \dfrac{\partial \phi}{\partial x_i}\left( \dfrac{\partial v_h}{\partial x_i}  - \Pi_{h}^{\bm{e}_i} \dfrac{\partial v_h}{\partial x_i}\right)  \,\mathrm{d}S}_{I_{11}}
+ \underbrace{\sum_{T\in\mathcal{T}_h} \int_{F_{T,j}^{+} - F_{T,j}^{-}} \phi  \dfrac{\partial}{\partial x_i}\left( \dfrac{\partial v_h}{\partial x_i}  - \Pi_{h}^{\bm{e}_i} \dfrac{\partial v_h}{\partial x_i}\right)  \,\mathrm{d}S}_{I_{12}}.
\end{align*}
Here, the second equality applies a new trick called {\it exchange of sub-rectangles} (see Figure \ref{fig:exchange}). Again, the $C^0$-continuity of $\frac{\partial \phi}{\partial x_i} $ across the faces $F_{T,j}^{\pm}$ provides
\begin{equation} \label{eq:exchange-I11}
|I_{11}| = \left|\sum_{T\in\mathcal{T}_h} \int_{F_{T,j}^{+} - F_{T,j}^{-}}\dfrac{\partial \phi}{\partial x_i}\dfrac{\partial }{\partial x_i} \left( v_h -\Pi_h^c v_h\right)  \,\mathrm{d}S \right| \lesssim h\vert \phi \vert_{1,\Omega} \vert v_h \vert_{3,h}.
\end{equation}

Now let $P_F^0: L^2(F) \to \mathcal{P}_0(F)$ be the orthogonal projection. Thanks to Lemma \ref{lm:Adini-tn-local} (Properties of Adini-type element by local interpolation), we obtain 
\begin{equation} \label{eq:exchange-I12}
\begin{aligned}
|I_{12}| &= \left|\sum_{T\in\mathcal{T}_h} \int_{F_{T,j}^{+} - F_{T,j}^{-}} \phi  \dfrac{\partial}{\partial x_i}\left( \dfrac{\partial v_h}{\partial x_i}  - \Pi_{h}^{\bm{e}_i} \dfrac{\partial v_h}{\partial x_i}\right)  \,\mathrm{d}S\right| \\
&=  \left|\sum_{T\in\mathcal{T}_h} \int_{F_{T,j}^{+} - F_{T,j}^{-}} (\phi - P^0_{F} \phi)  \dfrac{\partial}{\partial x_i}\left( \dfrac{\partial v_h}{\partial x_i}  - \Pi_{h}^{\bm{e}_i} \dfrac{\partial v_h}{\partial x_i}\right)  \,\mathrm{d}S\right| \\
&=  \left|\sum_{T\in\mathcal{T}_h} \int_{F_{T,j}^{+} - F_{T,j}^{-}} (\phi - P^0_{F} \phi)  \dfrac{\partial^2}{\partial x_i^2}(v_h  - \Pi_{h}^c v_h) \,\mathrm{d}S\right| 
\lesssim h\vert \phi \vert_{1,\Omega} \vert v_h \vert_{3,h}.
\end{aligned}
\end{equation} 
Combining \eqref{eq:exchange-I2}, \eqref{eq:exchange-I11} and \eqref{eq:exchange-I12}, we finish the proof for the Adini-type element. 

\underline{Part II: Sketch of the proof for Morley-type element.} We recall the special property of Morley-type element \eqref{eq:Morley-Adini}, and consider the fact that the basis functions $r_k^\pm$ defined in \eqref{eq:Morely-basis} depend only on the single variable $x_k$. Then, 
$$
\begin{aligned}
\sum_{T\in\mathcal{T}_h}\int_{\partial T} \phi \dfrac{\partial^2 v_h}{\partial x_i \partial x_j} \nu_i \,\mathrm{d}S 
&= \sum_{T\in\mathcal{T}_h}\int_{\partial T} \phi \dfrac{\partial^2 (\Pi_h^{\bm{1}} v_h)}{\partial x_i \partial x_j} \nu_i \,\mathrm{d}S  \\
& = \sum_{T\in\mathcal{T}_h} \int_{\partial F_{T,i}^{+} - \partial F_{T,i}^{-}} \phi \dfrac{\partial (\Pi_h^{\bm{1}}v_h)}{\partial x_i} \nu_j \,\mathrm{d}\ell  - \sum_{T\in\mathcal{T}_h}
\int_{F_{T,i}^{+}-F_{T,i}^{-}} \dfrac{\partial \phi}{\partial x_j} \dfrac{\partial (\Pi_h^{\bm{1}}v_h)}{\partial x_i} \,\mathrm{d}S := \tilde{I}_1 + \tilde{I}_2.
\end{aligned}
$$
The estimate of $\tilde{I}_2$ is then similar to \eqref{eq:exchange-I2}, by noticing that $\Pi_T^{\bm{1}}$ (local projection of Adini-type element) preserves $\mathcal{P}_3(T)$, namely,
$$ 
\begin{aligned}
|\tilde{I}_2| &\leq \Big \vert \sum_{T\in\mathcal{T}_h}\int_{F_{T,i}^{+}-F_{T,i}^{-}} \dfrac{\partial \phi}{\partial x_j} \dfrac{\partial}{\partial x_i} \left(v_h-\Pi_h^c v_h\right) \,\mathrm{d}S \big \vert + \Big \vert \sum_{T\in\mathcal{T}_h}\int_{F_{T,i}^{+}-F_{T,i}^{-}} \dfrac{\partial \phi}{\partial x_j} \dfrac{\partial}{\partial x_i} \left(v_h-\Pi_h^{\bm{1}} v_h\right) \,\mathrm{d}S \big \vert \\
& \lesssim h |\phi|_{1,\Omega} |v_h|_{3,h}.
\end{aligned}
$$ 
For $\tilde{I}_1$, we insert a global $C^0$ $Q_1$-projection of $\frac{\partial (\Pi_h^{\bm{1}} v_h) }{\partial x_i}$ to obtain that
\begin{equation*}
\tilde{I}_1 = \sum_{T\in\mathcal{T}_h} \int_{\partial F_{T,i}^{+} - \partial F_{T,i}^{-}} \phi \left( \dfrac{\partial (\Pi_h^{\bm{1}}v_h)}{\partial x_i}  - \Pi_{h}^{\bm{0}} \dfrac{\partial (\Pi_h^{\bm{1}}v_h)}{\partial x_i}\right) \nu_j  \,\mathrm{d}\ell.
\end{equation*}
Then the estimate follows from the similar trick (exchange of sub-rectangles) by involving Lemma \ref{lm:Morley-tn-local} (local projection of Morley-type element). $\blacksquare$
\end{proof}

\section{Convergence Analysis and Error Estimate} \label{sc:convergence}
In this section, we will give the convergence analysis of the elements and the error estimate for solving the sixth-order partial differential equations. Given $f \in L^2(\Omega)$, we consider the following tri-harmonic equation:
\begin{equation}\label{eq:tri-harmonic}
\left\{
    \begin{aligned}
        (-\Delta)^3 u  = f &\quad\text{in } \Omega,\\
        u = \dfrac{\partial u}{\partial \nu} = \dfrac{\partial^2 u}{\partial \nu^2}=0 &\quad \text{on } \partial\Omega,
     \end{aligned}
\right.
\end{equation}
where $\Delta$ is the standard Laplacian operator.
Define the bilinear form
\begin{equation} \label{eq:bilinear-a}
a(w,v) = \int_{\Omega} \nabla^3 w : \nabla^3 v\,\mathrm{d}x 
= \int_{\Omega} \sum_{i,j,k=1}^{n} \dfrac{\partial^3 w}{\partial x_i \partial x_j \partial x_k} \dfrac{\partial^3 v}{\partial x_i \partial x_j \partial x_k}\,\mathrm{d}x \quad \forall w,v \in H^3(\Omega).
\end{equation} 
Then, the weak form for the equation \eqref{eq:tri-harmonic} is to find $u\in H_0^3(\Omega)$ such that
\begin{equation} \label{eq:weak-a}
a(u,v) = (f,v) \quad \forall v \in H_0^3(\Omega).
\end{equation}

Since the finite element spaces $V_h$ are $H^3$-nonconforming, we define a discrete bilinear form for $\forall w, v \in L^2(\Omega)$ with $w|_{T}, v|_{T} \in H^3(T), \forall T \in \mathcal{T}_h$, 
\begin{equation} \label{eq:bilinear-ah}
a_h(w,v)=\sum_{T\in\mathcal{T}_h}\int_{T}\sum_{i,j,k=1}^{n} \dfrac{\partial^3 w}{\partial x_i \partial x_j \partial x_k} \dfrac{\partial^3 v}{\partial x_i \partial x_j \partial x_k}\,\mathrm{d}x.
\end{equation}
Corresponding to the $n$-rectangle Morley-type element or the $n$-rectangle Adini-type element, the finite element method for \eqref{eq:tri-harmonic} is to find $u_h \in  V_{h0}$ such that
\begin{equation}\label{eq:weak-ah}
a_h(u_h, v_h) = (f, v_h) \quad \forall v_h \in V_{h0}.
\end{equation}


We are in the position to estimate the consistency error:
\begin{theorem}[Consistency error] \label{tm:consistency}
Let $V_{h0}$ be the finite element space of the $n$-rectangle Morley-type element or the $n$-rectangle Adini-type element. If $u \in H^{3+s}(\Omega) \cap H_0^3(\Omega)$ for $s \in [0,1]$ and $f \in L^2(\Omega)$, then we have
\begin{equation}\label{err-inconsistency}
\vert a_h(u,v_h) - (f,v_h) \vert \lesssim (h^s \vert u \vert_{3+s, \Omega} + h^3\|f\|_{0,\Omega}) |v_h|_{3,h} \quad \forall v_h \in V_{h0}.
\end{equation}
\end{theorem}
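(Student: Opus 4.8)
\emph{Proof strategy.} The plan is to first reduce the regularity of the exact solution by means of the $H^4$-conforming relative $\tilde\Pi_h$ from Lemma~\ref{lemma: conforming-relative-H4}, then integrate by parts once and dispose of the resulting face integrals through the weak-continuity results already at hand. Set $u_I:=\tilde\Pi_h u\in H^4(\Omega)\cap H_0^3(\Omega)$ and split
$$a_h(u,v_h)-(f,v_h)=a_h(u-u_I,v_h)+\big(a_h(u_I,v_h)-(f,v_h)\big).$$
By Cauchy--Schwarz and \eqref{err-conforming-relative-H4}, $|a_h(u-u_I,v_h)|\le|u-u_I|_{3,h}\,|v_h|_{3,h}\lesssim h^s|u|_{3+s,\Omega}|v_h|_{3,h}$. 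For the second piece I would bring in the $H^3$-conforming relative $\Pi_h^c v_h\in V_h^c\subset H_0^3(\Omega)$ (Lemma~\ref{lemma: conforming-relative-H3}): testing the weak equation \eqref{eq:weak-a} with $\Pi_h^c v_h$ gives $(f,v_h)=a(u,\Pi_h^c v_h)+(f,v_h-\Pi_h^c v_h)$, and since $a_h(u_I,\Pi_h^c v_h)=a(u_I,\Pi_h^c v_h)$ (both arguments lie in $H^3(\Omega)$) we obtain
$$a_h(u_I,v_h)-(f,v_h)=\underbrace{a_h(u_I,v_h-\Pi_h^c v_h)}_{=:E}+a(u_I-u,\Pi_h^c v_h)-(f,v_h-\Pi_h^c v_h).$$
The two trailing terms are immediate: $|a(u_I-u,\Pi_h^c v_h)|\lesssim|u-u_I|_{3,\Omega}|\Pi_h^c v_h|_{3,\Omega}\lesssim h^s|u|_{3+s,\Omega}|v_h|_{3,h}$ by \eqref{err-conforming-relative-H4} and \eqref{err-conforming-relative-H3}, and $|(f,v_h-\Pi_h^c v_h)|\le\|f\|_{0,\Omega}\|v_h-\Pi_h^c v_h\|_{0,\Omega}\lesssim h^3\|f\|_{0,\Omega}|v_h|_{3,h}$ by \eqref{err-conforming-relative-H3}. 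Thus everything reduces to estimating $E$; put $w_h:=v_h-\Pi_h^c v_h$, for which \eqref{err-conforming-relative-H3} yields $|w_h|_{m,h}\lesssim h^{3-m}|v_h|_{3,h}$ for $0\le m\le3$.

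Since $u_I\in H^4(\Omega)$, one integration by parts per element is globally legitimate and gives
$$E=-\sum_{T\in\mathcal{T}_h}\int_T\nabla^2(\Delta u_I):\nabla^2 w_h\,\mathrm{d}x+\sum_{T\in\mathcal{T}_h}\int_{\partial T}\sum_{i,j,k}\frac{\partial^3u_I}{\partial x_i\partial x_j\partial x_k}\,\nu_i\,\frac{\partial^2w_h}{\partial x_j\partial x_k}\,\mathrm{d}S.$$
For the volume term I combine $|w_h|_{2,h}\lesssim h|v_h|_{3,h}$ with the fractional inverse estimate $|u_I|_{4,h}\lesssim h^{s-1}|u_I|_{3+s,h}\le h^{s-1}|u_I|_{3+s,\Omega}\lesssim h^{s-1}|u|_{3+s,\Omega}$ --- legitimate because $u_I$ lies in a finite element space on a quasi-uniform mesh and is $H^{3+s}$-bounded by \eqref{err-conforming-relative-H4} --- to obtain the bound $h^s|u|_{3+s,\Omega}|v_h|_{3,h}$. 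In the face term, $\nabla^3u_I$ is single-valued across interior faces (it lies in $H^1(\Omega)$) and the $\Pi_h^c v_h$-part of $\partial_{jk}w_h$ is continuous across interior faces and vanishes on $\partial\Omega$ (as $\Pi_h^c v_h\in H^3(\Omega)\cap H_0^3(\Omega)$), so summing over $\partial T$ with opposite face orientations collapses the face term to $\sum_F\int_F\sum_{ijk}\partial_{ijk}u_I\,\nu_i\,[\![\partial_{jk}v_h]\!]$ over the $(n-1)$-faces, with $v_h$ now in place of $w_h$. On a face with normal $e_\ell$ only $i=\ell$ remains, and I split the second-order derivative of $v_h$ into tangential--tangential, normal--normal, and tangential--normal parts. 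The tangential--tangential part vanishes identically for the Adini-type element (the restriction $v_h|_F$ is single-valued, and zero on $\partial\Omega$), and for the Morley-type element has zero face-mean by Lemma~\ref{lm:Morley-tt} together with \eqref{eq:Morley-Adini}; the normal--normal part vanishes identically for the Adini-type element by Lemma~\ref{lm:Adini-nn}, and for the Morley-type element has zero face-mean by Lemma~\ref{lm:Morley-nn}. For the Morley cases one subtracts the face-mean of $\partial_{ijk}u_I$ (it multiplies a zero-mean jump and drops out) and estimates $\|\partial_{ijk}u_I-\overline{\partial_{ijk}u_I}^F\|_{0,F}\lesssim h^{1/2}|u_I|_{4,T}$ against the $O(h^{1/2})|v_h|_{3,\omega_F}$ size of the jump, $\omega_F$ the face patch. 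The tangential--normal part is exactly the configuration of Lemma~\ref{lm:err-tn}, applied with the piecewise-polynomial coefficient $\phi=\partial_\ell^2\partial_t u_I\in H^1(\Omega)$, $t\ne\ell$, and yields $\lesssim h|u_I|_{4,\Omega}|v_h|_{3,h}$. Every residual factor $h|u_I|_{4,\Omega}$ is absorbed by the fractional inverse estimate into $\lesssim h^s|u|_{3+s,\Omega}|v_h|_{3,h}$, and collecting all the contributions proves \eqref{err-inconsistency}.

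The step demanding the most care is the tangential--normal face term: it cannot be controlled through the degrees of freedom and is precisely what the exchange-of-sub-rectangles mechanism of Lemma~\ref{lm:err-tn} was designed for; one has to check that $\partial_\ell^2\partial_t u_I$ is an admissible piecewise-polynomial $H^1$ coefficient and that, after discarding the $\Pi_h^c v_h$ contribution, the term really matches the left-hand side of \eqref{eq:err-tn}. A quieter subtlety, already present in the volume term, is that getting the sharp exponent $h^s$ for $s<1$ (rather than the suboptimal $h^0$) hinges on the fractional inverse estimate for $\tilde\Pi_h u$ in $H^4$; the crude bound $|u_I|_{4,h}\lesssim h^{-1}|u_I|_{3,h}$ is not enough.
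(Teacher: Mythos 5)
Your proposal follows essentially the same route as the paper's proof: the identical three-term splitting via the $H^4$ conforming relative $\tilde\Pi_h u$ and the $H^3$ conforming relative $\Pi_h^c v_h$, one integration by parts per element, cancellation of the $\Pi_h^c v_h$ contribution on the faces, the same $\tau\tau$/$\nu\nu$/$\tau\nu$ decomposition handled by Lemmas~\ref{lm:Morley-tt}, \ref{lm:Morley-nn}, \ref{lm:Adini-nn} and the exchange-of-sub-rectangles Lemma~\ref{lm:err-tn}. The only (harmless) technical deviation is how the $h^s$ rate is extracted: you use a fractional inverse estimate on $\tilde\Pi_h u$ together with the stability in \eqref{err-conforming-relative-H4}, whereas the paper uses interpolation of spaces for the face term $E_1$ and inserts the projection $P_T^0\nabla\Delta u$ for the volume term $E_2$; both devices give the same bound.
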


\begin{proof}
Following the notation in Lemma~\ref{lemma: conforming-relative-H4} (approximation property of $H^4$ conforming relative), we take $w_h := \tilde{\Pi}_h u \in \tilde{V}_h$ as the conforming approximation of $u$. Then, the consistency error can be written as
$$
a_h(u,v_h) - (f,v_h) = a_h(u-w_h, v_h - \Pi_h^c v_h) + a_h(w_h, v_h - \Pi_h^c v_h) - (f, v_h - \Pi_h^c v_h).
$$
Thanks to Lemma~\ref{lemma: conforming-relative-H3}(approximation property of $H^3$ conforming relative), the first and the third term can be estimated by
\begin{align}
&\vert a_h(u-w_h, v_h - \Pi_h^c v_h) \vert \lesssim \vert u-w_h \vert_{3,h}\vert  v_h - \Pi_h^c v_h \vert_{3,h} \lesssim \vert u-w_h \vert_{3,h} \vert v_h \vert_{3,h} \label{eq:consistency-1}\\ 
& \vert (f, v_h - \Pi_h^c v_h)  \vert \lesssim \|f\|_{0,\Omega} \|v_h - \Pi_h^c v_h\|_{0,\Omega} \lesssim h^3 \|f\|_{0,\Omega} \vert v_h \vert_{3,h}.\label{eq:consistency-2}
\end{align}
For the middle term of the consistency error, we have 
\begin{align*}
a_h(w_h, v_h - \Pi_h^c v_h)  &= \sum_{T\in\mathcal{T}_h} \int_{T} \nabla^3 w_h : \nabla^3 (v_h - \Pi_h^c v_h)\,\mathrm{d}x \\
& =  \underbrace{\sum_{T\in\mathcal{T}_h} \int_{\partial T} \dfrac{\partial}{\partial \nu}(\nabla^2 w_h) : \nabla^2 (v_h - \Pi_h^c v_h)\, \mathrm{d}S}_{:= E_1} 
\underbrace{-\sum_{T\in\mathcal{T}_h} \int_{T} \nabla^2(\Delta w_h) : \nabla^2 (v_h - \Pi_h^c v_h)\,\mathrm{d}x.}_{:=E_2}
\end{align*}
Using the $C^3$-continuity of $w_h$ in Lemma \ref{lemma: conforming-relative-H4} (approximation property of $H^4$ conforming relative) and $C^2$-continuity of $\Pi_h^c v_h$ in Lemma \ref{lemma: conforming-relative-H3} (approximation property of $H^3$ conforming relative), we find
\begin{equation} \label{eq:E1-1}
\begin{aligned}
E_1 & = \sum_{T \in \mathcal{T}_h}\int_{\partial T} \dfrac{\partial}{\partial \nu}(\nabla^2 w_h) : \nabla^2 v_h\,\mathrm{d}S \\ 
& = \sum_{T \in \mathcal{T}_h}\int_{\partial T} \dfrac{\partial^3 w_h}{\partial \nu^3}\dfrac{\partial^2 v_h}{\partial \nu^2}\,\mathrm{d}S 
+2 \sum_{T \in \mathcal{T}_h} \sum_{j=1}^{n-1}\int_{\partial T} \dfrac{\partial^3 w_h}{\partial \nu^2 \partial \tau_j}\dfrac{\partial^2 v_h}{\partial \nu \partial \tau_j} \,\mathrm{d}S \\
&\quad + \sum_{T \in \mathcal{T}_h} \sum_{j=1}^{n-1} \sum_{k=1}^{n-1} \int_{\partial T} \dfrac{\partial^3 w_h}{\partial \nu \partial \tau_j \partial \tau_k}\dfrac{\partial^2 v_h}{\partial \tau_j \partial \tau_k} \,\mathrm{d}S := E_{1,\nu\nu} + E_{1,\tau\nu} + E_{1,\tau\tau},
\end{aligned}
\end{equation}
where $\{\tau_j\}_{j=1}^{n-1}$ is the set of unit orthogonal vectors along $\partial T$.

\underline{Estimate of $E_1$}. For the Morley-type element, Lemma \ref{lm:Morley-tt} (tangential-tangential weak continuity for Morley) and Lemma \ref{lm:Morley-nn} (normal-normal weak continuity for Morley) imply that, by a standard scaling argument, 
$$ 
|E_{1,\nu\nu}| + |E_{1,\tau\tau}| \lesssim h|w_h|_{4,\Omega} |v_h|_{3,h}.
$$ 
For the Adini-type element, the $C^0$-continuity of $V_h$ and Lemma \ref{lm:Adini-nn} (normal-normal strong continuity for Adini) imply that $E_{1,\nu\nu} = E_{1,\tau\tau} = 0$. 

For the tangential-normal term, on each $(n-1)$-dimensional face of $T \in \mathcal{T}_h$, we notice that $\nu_i \nu_j = 0$ for $i \neq j$. It follows that $\frac{\partial v_h}{\partial x_j}$ is the tangent derivative along the faces on which $\nu_i$ is not zero. Therefore, 
$$ 
E_{1,\tau\nu} = 2 \sum_{T \in \mathcal{T}_h} \sum_{j=1}^{n-1}\int_{\partial T} \dfrac{\partial^3 w_h}{\partial \nu^2 \partial \tau_j}\dfrac{\partial^2 v_h}{\partial \nu \partial \tau_j} \,\mathrm{d}S
= 2 \sum_{T\in \mathcal{T}_h} \sum_{i=1}^n \sum_{j=1, j\neq i}^n \int_{\partial T} \dfrac{\partial^3 w_h}{\partial x_i^2 \partial x_j} \dfrac{\partial^2 v_h}{\partial x_i \partial x_j} \nu_i \,\mathrm{d}S.
$$ 
Then, we apply Lemma \ref{lm:err-tn} (estimate of tangential-normal terms) to conclude that
\begin{equation*}
|E_{1,\tau\nu}| \lesssim h \vert w_h \vert_{4,\Omega} \vert v_h \vert_{3,h}.
\end{equation*} 
By using interpolation of spaces and Lemma \ref{lemma: conforming-relative-H4} (approximation property of $H^4$ conforming relative), we have 
\begin{equation}\label{eq:estimate-E1}
|E_1 | \lesssim h^s\vert w_h \vert_{3+s,\Omega} \vert v_h \vert_{3,h} \lesssim h^s\vert u \vert_{3+s,\Omega} \vert v_h \vert_{3,h}. 
\end{equation}

\underline{Estimate of $E_2$}. Using the orthogonal projection $P_T^0: L^2(T) \to \mathcal{P}_0(T)$, we have 
$$
E_2 = -\sum_{T\in\mathcal{T}_h} \int_{T} \nabla (\nabla \Delta w_h - P_T^0 \nabla \Delta u) : \nabla^2 (v_h - \Pi_h^c v_h)\,\mathrm{d}x.
$$
Therefore, the inverse inequality and the standard approximation property of $P_T^0$ imply
\begin{equation} \label{eq:estimate-E2}
\begin{aligned}
|E_2| &\lesssim \sum_{T\in\mathcal{T}_h}  h_T^{-1} \| \nabla \Delta w_h - P_T^0 \nabla \Delta u \|_{0,T} \vert v_h - \Pi_h^c v_h\vert_{2,T} \\ 
& \lesssim \vert u-w_h \vert_{3,h} \vert v_h \vert_{3,h} + \sum_{T \in \mathcal{T}_h} \| \nabla \Delta u- P_T^0 \nabla \Delta u \|_{0, T} \vert v_h \vert_{3,T} \\
& \lesssim \left( |u - w_h|_{3,h} + h^s |u|_{3+s,\Omega} \right) |v_h|_{3,h}. 
\end{aligned}
\end{equation}

Combining \eqref{eq:consistency-1}, \eqref{eq:consistency-2}, \eqref{eq:estimate-E1}, \eqref{eq:estimate-E2} with the approximation property \eqref{err-conforming-relative-H4}, we prove the desired estimate. $\blacksquare$
\end{proof}

Based on the well-known Strang's Lemma
\begin{equation*}
\vert u-u_h\vert_{3,h} \lesssim \mathop{\inf}_{v_h \in V_{h0}} \vert u-v_h\vert_{3,h} + \mathop{\sup}_{0\neq v_h \in V_{h0}} \dfrac{\vert a_h(u,v_h) - (f,v_h) \vert}{\vert v_h \vert_{3,h}},
\end{equation*}
and the interpolation theory, we finally arrive at the following convergence result.  

\begin{theorem}
Let $V_{h0}$ be the finite element space of the $n$-rectangle Morley-type element or the $n$-rectangle Adini-type element. If $u \in H^{3+s}(\Omega) \cap H_0^3(\Omega)$ for $s \in [0,1]$ solves \eqref{eq:tri-harmonic} with $f \in L^2(\Omega)$, then 
\begin{equation}
\|u-u_h\|_{3,h} \lesssim h^s\vert u \vert_{3+s,\Omega} + h^3 \|f\|_{0,\Omega}.
\end{equation}
\end{theorem}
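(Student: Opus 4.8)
The plan is to invoke the classical Strang's second lemma for nonconforming methods, which is already stated just above the theorem, and then bound its two contributions separately. The abstract estimate reads $\vert u-u_h\vert_{3,h} \lesssim \inf_{v_h \in V_{h0}} \vert u-v_h\vert_{3,h} + \sup_{0\neq v_h \in V_{h0}} \vert a_h(u,v_h) - (f,v_h) \vert / \vert v_h \vert_{3,h}$, so it suffices to control the best-approximation error and the consistency error. For the first term, I would apply Theorem~\ref{thm:approx-of-Vh-Vh0} (approximation property), specifically the bound \eqref{eq:approx-Vh0} for $V_{h0}$, which gives $\inf_{v_h\in V_{h0}} \sum_{m=0}^3 h^m|v-v_h|_{m,h} \lesssim h^{3+s}|v|_{3+s,\Omega}$ for any $v \in H^{3+s}(\Omega)\cap H_0^3(\Omega)$; taking $v = u$ (the exact solution of \eqref{eq:tri-harmonic}, which lies in $H^{3+s}(\Omega)\cap H_0^3(\Omega)$ by assumption) and keeping only the $m=3$ term yields $\inf_{v_h\in V_{h0}}|u-v_h|_{3,h} \lesssim h^s|u|_{3+s,\Omega}$.

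For the second term, I would directly quote Theorem~\ref{tm:consistency} (consistency error), which asserts $\vert a_h(u,v_h) - (f,v_h)\vert \lesssim (h^s|u|_{3+s,\Omega} + h^3\|f\|_{0,\Omega})|v_h|_{3,h}$ for all $v_h \in V_{h0}$; dividing by $|v_h|_{3,h}$ and taking the supremum over nonzero $v_h$ gives exactly $h^s|u|_{3+s,\Omega} + h^3\|f\|_{0,\Omega}$. Adding the two bounds and noting that $\|u-u_h\|_{3,h} \lesssim |u-u_h|_{3,h} + \text{lower-order terms}$, or more simply that the mesh-dependent full norm is equivalent to the seminorm for the purposes here (since $u, u_h$ can be compared through the zero boundary conditions and a Poincar\'e-type inequality on the broken space, or one simply tracks the full-norm version of the approximation estimate), we conclude $\|u-u_h\|_{3,h} \lesssim h^s|u|_{3+s,\Omega} + h^3\|f\|_{0,\Omega}$.

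Since both ingredients are already established as earlier results, there is essentially no obstacle left in this final theorem — it is a two-line assembly. The only mild point requiring a word of care is passing from the $H^3$-seminorm control delivered by Strang's lemma to the full broken $H^3$-norm $\|\cdot\|_{3,h}$ in the statement; I would handle this either by using a broken Poincar\'e--Friedrichs inequality on $V_{h0}$ (whose boundary DoFs vanish on $\partial\Omega$) to absorb the lower-order seminorms into $|u-u_h|_{3,h}$, or by simply observing that the approximation estimate \eqref{eq:approx-Vh0} already controls $\sum_{m=0}^3 h^m|u-v_h|_{m,h}$ and that the consistency bound feeds only the top-order part, so the lower-order errors inherit the same $h^s$ rate without any loss. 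Either route closes the argument immediately.
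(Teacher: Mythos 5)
Your proposal is correct and follows essentially the same route as the paper, which also assembles the result directly from the stated Strang's Lemma, the approximation property of Theorem~\ref{thm:approx-of-Vh-Vh0}, and the consistency estimate of Theorem~\ref{tm:consistency}. Your extra remark on passing from the broken seminorm to the full broken norm is a reasonable point of care that the paper leaves implicit.
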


\section{Numerical Experiments} \label{sc:numerical}

In this section, we present several numerical results in both 2D and 3D to support the theoretical results.

\begin{example}[2D smooth solution]
In the first example, we test the Adini-type $H^3$-nonconforming finite element by solving the following two-dimensional triharmonic equation:
\begin{equation*}
(-\Delta)^3 u = f, ~ x \in \Omega,
\end{equation*} 
where $\Omega=(0,1)^2$. We choose the source term and  boundary conditions so that the exact solution is given by $u(x,y) = \cos(2 \pi x)\cos(2\pi y)$. We compute the numerical solution and calculate its convergence order in the sense of $H^k$ broken norm, where $k=1,2,3$. The following table shows the numerical results obtained on uniform $n$-rectangle meshes with various mesh-sizes $h$. We see that the numerical solution approximates to the exact solution with a linear convergence in the $H^3$ semi-norm, which corresponds with our theoretical prediction. Moreover, the table also indicates that both $\vert u-u_h \vert_{1,h}$ and $\vert u-u_h \vert_{2,h}$ is of the second-order.
\begin{table}[H]
\centering
\begin{tabular}{|c|c|c|c|c|c|c|c|c|}
\hline
$N$ & $\Vert u-u_h \Vert_0$ & order & $\vert u - u_h\vert_{1,h}$ & order & $\vert u - u_h \vert_{2,h}$ & order & $\vert u - u_h \vert_{3,h}$ & order \\ \hline
4 & 1.142e-01 & - & 7.092e-01 & - & 8.272e+00 & - & 1.436e+02 & -\\ \hline 
8 & 3.140e-02 & 1.86 & 1.822e-01 & 1.96 & 2.115e+00 & 1.97 & 6.971e+01 & 1.04\\ \hline 
16 & 7.997e-03 & 1.97 & 4.566e-02 & 2.00 & 5.320e-01 & 1.99 & 3.455e+01 & 1.01\\ \hline 
32 & 2.008e-03 & 1.99 & 1.142e-02 & 2.00 & 1.332e-01 & 2.00 & 1.723e+01 & 1.00\\ \hline 
64 & 5.027e-04 & 2.00 & 2.855e-03 & 2.00 & 3.331e-02 & 2.00 & 8.612e+00 & 1.00\\ \hline 
\end{tabular}
\caption{Numerical errors and observed convergence orders of Adini-type element for \textbf{Example 6.1}.}
\end{table}
\end{example}

\begin{example}[2D singular solution]
In this example, we solve the triharmonic equation on a two-dimensional L-shaped domain $\Omega = (-1,1)^2\setminus [0,1)\times(-1,0]$, in  which the solution has partial regularity. The exact solution is given in the polar coordinates $(r,\theta)$ as
\begin{equation*}
u(r,\theta) = r^{2.5} \sin(2.5 \theta).
\end{equation*}
Due to the singularity at the origin, we have $u \in H^{3+1/2-\epsilon} (\Omega)$ for any $\epsilon > 0$. Our method converges with the optimal rate $1/2$ in the $H^3$ broken norm, which is shown in the following table.
\begin{table}[H]
\centering
\begin{tabular}{|c|c|c|c|c|c|c|c|c|}
\hline
$N$ & $\Vert u-u_h \Vert_0$ & order & $\vert u - u_h\vert_{1,h}$ & order & $\vert u - u_h \vert_{2,h}$ & order & $\vert u - u_h \vert_{3,h}$ & order \\ \hline
2 & 4.031e-03 & - & 2.223e-02 & - & 2.049e-01 & - & 2.353e+00 & -\\ \hline 
4 & 1.589e-03 & 1.34 & 8.677e-03 & 1.36 & 8.988e-02 & 1.19 & 1.630e+00 & 0.53\\ \hline 
8 & 7.368e-04 & 1.11 & 4.002e-03 & 1.12 & 3.980e-02 & 1.18 & 1.140e+00 & 0.52\\ \hline 
16 & 3.442e-04 & 1.10 & 1.860e-03 & 1.11 & 1.776e-02 & 1.16 & 8.030e-01 & 0.51\\ \hline 
32 & 1.603e-04 & 1.10 & 8.571e-04 & 1.12 & 7.969e-03 & 1.16 & 5.670e-01 & 0.50\\ \hline 
64 & 7.474e-05 & 1.10 & 3.940e-04 & 1.12 & 3.594e-03 & 1.15 & 4.007e-01 & 0.50\\ \hline
\end{tabular}
\caption{Numerical errors on the L-shaped domain and observed convergence orders of Adini-type element for \textbf{Example 6.2}.}
\end{table}
\end{example}

\begin{example}[3D smooth solution]
For the last example, let us consider solving the trihamonic equation on a three-dimensional domain $\Omega=(0,1)^3$. We choose the right hand side function and appropriate boundary conditions so that the exact solution of \eqref{eq:tri-harmonic} is 
\begin{equation*}
u(x,y,z) = \sin(2\pi x)\cos(\pi y)\cos(\pi z).
\end{equation*}
We solve the equation using both Adini-type and Morley-type nonconforming element and the results are shown in Table~\ref{table-3d-Adini} and Table~\ref{table-3d-Morley}, respectively. We observe that both the finite element methods have a first-order convergence to the exact solution in $H^3$ norm. 
\begin{table}[H]
\centering
\begin{tabular}{|c|c|c|c|c|c|c|c|c|}
\hline
$N$ & $\Vert u-u_h \Vert_0$ & order & $\vert u - u_h\vert_{1,h}$ & order & $\vert u - u_h \vert_{2,h}$ & order & $\vert u - u_h \vert_{3,h}$ & order \\ \hline
2 & 8.721e-02 & - & 9.877e-01 & - & 1.008e+01 & - & 9.809e+01 & -\\ \hline 
4 & 6.866e-03 & 3.67 & 1.275e-01 & 2.95 & 2.302e+00 & 2.13 & 3.741e+01 & 1.39\\ \hline 
8 & 4.389e-04 & 3.97 & 1.702e-02 & 2.90 & 5.926e-01 & 1.96 & 1.781e+01 & 1.07\\ \hline 
16 & 5.028e-05 & 3.13 & 2.237e-03 & 2.93 & 1.494e-01 & 1.99 & 8.785e+00 & 1.02\\ \hline 
32 & 1.352e-05 & 1.89 & 3.181e-04 & 2.81 & 3.742e-02 & 2.00 & 4.377e+00 & 1.01\\ \hline 
\end{tabular}
\caption{Numerical errors and observed convergence orders of Adini-type element for \textbf{Example 6.3}.}
\label{table-3d-Adini}
\end{table}

\begin{table}[H]
\centering
\begin{tabular}{|c|c|c|c|c|c|c|c|c|}
\hline
$N$ & $\Vert u-u_h \Vert_0$ & order & $\vert u - u_h\vert_{1,h}$ & order & $\vert u - u_h \vert_{2,h}$ & order & $\vert u - u_h \vert_{3,h}$ & order \\ \hline
2 & 1.210e-01 & - & 1.216e+00 & - & 1.120e+01 & - & 1.153e+02 & - \\ \hline
4 & 9.100e-03 & 3.73 & 1.439e-01 & 3.08 & 2.473e+00 & 2.18 & 4.254e+01 & 1.44 \\ \hline
8 & 1.100e-03 & 3.05 & 1.990e-02 & 2.85 & 6.352e-01 & 1.96 & 1.888e+01 & 1.17\\ \hline
16 & 1.741e-04 & 2.66 & 2.900e-03 & 2.78 & 1.583e-01 & 2.00 & 8.949e+00 & 1.08\\ \hline
32 & 3.678e-05 & 2.24 & 5.192e-04 & 2.48 & 3.950e-02 & 2.00 & 4.401e+00 & 1.02\\ \hline
\end{tabular}
\caption{Numerical errors and observed convergence orders of Morley-type element for \textbf{Example 6.3}.}
\label{table-3d-Morley}
\end{table}
\end{example}

\section{Concluding Remarks} \label{sc:conclusion}
We propose two new families of nonconforming finite element for solving the sixth-order equations. We begin by proving some basic properties of such finite elements and discussing their approximation abilities in any dimensionaliy $n\geq 2$. 
After showing the approximation property and the stability of the interpolation operator,  we provide some key lemmas to obtain the main convergence theory for solving the sixth-order equations. By using the technique of conforming relatives, we discover that the numerical solutions of these on-conforming finite elements have an $h^s$ convergence order where $s\in[0,1]$, provided that the exact solution has $H^{3+s}$ regularity. We then give two examples to examine our theories for the cases $n=2$ and $n=3$ respectively, and another one example to show the robustness of our method when solving the triharmonic equation with a singular solution.

Although the new technique (i.e., exchange of sub-rectangles) presented in this paper focuses on the sixth-order equations, we believe it has the potential to be extended to higher-order equations. This will also be our future work.

\end{document}